\documentclass{article}

\usepackage{multirow}
\usepackage{booktabs}
\usepackage{enumitem}
\usepackage{algorithm}
\usepackage{algorithmicx}
\usepackage{algpseudocode}
\usepackage{url}

\usepackage{graphicx}

\usepackage{amsmath, amssymb, amsthm}

\usepackage{cite}
\usepackage{amsmath,amssymb,amsthm,mathrsfs}
\newtheorem{theorem}{Theorem}[section]

\newtheorem{lemma}[theorem]{Lemma}
\newtheorem{proposition}[theorem]{Proposition}
\theoremstyle{definition}
\newtheorem{definition}[theorem]{Definition}

\newtheorem{assum}{Assumption}[section]

\DeclareMathOperator*{\grad}{grad}
\DeclareMathOperator*{\tr}{tr}
\newcommand{\real}{\mathbb{R}}
\newcommand{\nat}{\mathbb{N}}
\newcommand{\spd}{\mathcal{S}}
\newcommand{\epc}{\mathbb{E}}

\newcommand{\Exp}{\mathrm{Exp}}
\newcommand{\abs}[1]{\left|{#1}\right|}
\newcommand{\norm}[1]{\left\lVert{#1}\right\rVert}
\newcommand{\ip}[2]{\left\langle{#1,#2}\right\rangle}
\newcommand{\relmiddle}[1]{\mathrel{}\middle#1\mathrel{}}

\numberwithin{equation}{section}

%
\setlength\topmargin{-12pt}
\setlength\headheight{21.6pt}
\setlength\headsep{16.8pt}

\setlength\textheight{20.8cm}
\setlength\textwidth{14cm}


\begin{document}
\makeatletter

\begin{center}
\large{\bf Convergence of Riemannian Stochastic Gradient Descent on Hadamard Manifold}
\end{center}\vspace{5mm}
\begin{center}

\textsc{HIROYUKI SAKAI, HIDEAKI IIDUKA}\end{center}
\vspace{2mm}

\footnotesize{
\noindent\begin{minipage}{14cm}
{\bf Abstract:}
Novel convergence analyses are presented of Riemannian stochastic gradient descent (RSGD) on a Hadamard manifold.
RSGD is the most basic Riemannian stochastic optimization algorithm and is used in many applications in the field of machine learning.
The analyses incorporate the concept of mini-batch learning used in deep learning and overcome several problems in previous analyses.
Four types of convergence analysis are described for both constant and decreasing step sizes.
The number of steps needed for RSGD convergence is shown to be a convex monotone decreasing function of the batch size.
Application of RSGD with several batch sizes to a Riemannian stochastic optimization problem on a symmetric positive definite manifold theoretically shows that increasing the batch size improves RSGD performance.
Numerical evaluation of the relationship between batch size and RSGD performance provides evidence supporting the theoretical results.
\end{minipage}
 \\[5mm]

\noindent{\bf Keywords:} {Riemannian optimization, Hadamard manifolds, RSGD, critical batch size}\\
\noindent{\bf Mathematics Subject Classification:} {65K05, 90C26, 57R35}

\hbox to14cm{\hrulefill}\par


\section{Introduction}
Riemannian optimization has attracted a great deal of attention \cite{absil2008optimization, gao2020learning, sakai2022riemannian} in the field of machine learning.
In this paper, we consider a Hadamard manifold, which is a complete Riemannian manifold of which the sectional curvatures are less than or equal to zero.
From the viewpoints of application and convergence analysis, optimization problems on a Hadamard manifold are very important.
This is because, from the Cartan-Hadamard theorem, there exists an inverse map of the exponential map.
Hyperbolic spaces (represented, for example, by the Poincar{\'e} ball model and Poincar{\'e} half-plane model)
and symmetric positive definite (SPD) manifolds are examples of Hadamard manifolds with many applications in the field of machine learning.

Optimization problems on SPD manifolds have a variety of applications in the computer vision and machine learning fields.
In particular, visual representations often rely on SPD manifolds, such as the kernel matrix, the covariance descriptor \cite{tuzel2006region}, and the diffusion tensor image \cite{fletcher2007riemannian}.
Optimization problems on SPD manifolds are especially important in the medical imaging field \cite{cherian2017learning, fletcher2007riemannian}.
Furthermore, optimization problems in hyperbolic spaces are important in natural language processing.
Nickel and Kiela \cite{nickel2017poincare} proposed using Poincar{\'e} embedding, an example of a Riemannian optimization problem in hyperbolic space.
More specifically, they proposed embedding hierarchical representations of symbolic data (e.g., text, graph data) into the Poincar{\'e} ball model or Poincar{\'e} half-plane model of hyperbolic space. 

Riemannian stochastic optimization algorithms are used for solving these optimization problems on a Riemannian manifold.
Various such algorithms have been developed by extending gradient-based optimization algorithms in Euclidean space.
Bonnabel proposed using Riemannian stochastic gradient descent (RSGD), which is the most basic Riemannian stochastic optimization algorithm \cite{bonnabel2013stochastic}.
Sato, Kasai, and Mishra proposed using the Riemannian stochastic variance reduced gradient (RSVRG) algorithm with a retraction and vector transport \cite{sato2019riemannian}.
Moreover, they gave a convergence analysis of RSVRG under certain reasonable assumptions.
In general, the RSVRG algorithm converges to an optimal solution faster than RSGD;
however, the full gradient needs to be calculated every few steps with RSVRG.

Adaptive optimization algorithms such as AdaGrad \cite{duchi2011adaptive}, Adadelta \cite{zeiler2012adadelta}, Adam \cite{kingma2015adam},
and AMSGrad \cite{reddi2018convergence} are widely used for training deep neural networks in Euclidean space.
However, they cannot be easily extended to general Riemannian manifolds due to the absence of a canonical coordinate system.
Special measures must therefore be considered when extending them to Riemannian manifolds.
Kasai, Jawanpuria, and Mishra proposed generalizing adaptive stochastic gradient algorithms to Riemannian matrix manifolds by adapting the row and column subspaces of the gradients \cite{kasai2019riemannian}.
B{\'e}cigneul and Ganea proposed using the Riemannian AMSGrad (RAMSGrad) algorithm \cite{becigneul2018riemannian};
however, RAMSGrad is defined only on the \emph{product} of Riemannian manifolds by regarding each component of the product Riemannian manifold as a coordinate component in Euclidean space.

Bonnabel presented two types of RSGD convergence analysis on a Hadamard manifold \cite{bonnabel2013stochastic},
but both of them are based on unrealistic assumptions, and use only diminishing step sizes.
The first type is based on the assumption that the sequence generated by RSGD is contained in a compact set of a Hadamard manifold $M$.
Since it is difficult to predict the complete sequence, this assumption should be removed.
The second type is based an unrealistic assumption regarding the selection of step size.
Specifically, a function ($v:M\to\real$) determined by a Riemannian optimization problem must be computed, and a diminishing step size ($\alpha_k$ divided by $v(x_k)$, where $x_k$ is a $k$-th approximation defined by RSGD) must be used.
This is not a realistic assumption because the step size is determined by the Riemannian optimization problem to be solved and must be adapted \emph{manually}.

We have improved RSGD convergence analysis on a Hadamard manifold in accordance with the above discussion and present four types of convergence analysis for both constant and diminishing step sizes (see Section \ref{sec:analysis}).
First, we consider the case in which an objective function $f:M\to\real$ is $L$-smooth (Definition \ref{dfn:Lip}).
Theorems \ref{thm:Lip-cnv-cns} and \ref{thm:Lip-cnv-dim} are for convergence analyses with $L$-smooth assumption for constant and diminishing step sizes, respectively.
Since calculating the constant $L$ in the definition of $L$-smooth is often difficult, we also present convergence analyses for the function $f:M \to \real$ \emph{not} $L$-smooth.
Theorems \ref{thm:nLip-cnv-cns} and \ref{thm:nLip-cnv-dim} support convergence analyses without $L$-smooth assumption for constant and diminishing step sizes, respectively.
We summarize the existing and proposed analyses in Table \ref{tab:improvements} 

Moreover, we show that the number of steps $K$ needed for an $\varepsilon$-approximation of RSGD is convex and monotone decreasing with respect to batch size $b$.
We also show that SFO complexity (defined as $Kb$) is convex with respect to $b$ and that there exists a critical batch size such that SFO complexity is minimized (see Section \ref{sec:critical}).

This paper is organized as follows.
Section \ref{sec:preliminaries} reviews the fundamentals of Riemannian geometry and Riemannian optimization.
Section \ref{sec:analysis} presents four novel convergence analyses of RSGD on a Hadamard manifold.
Section \ref{sec:experiments} experimentally evaluate RSGD performance by solving the Riemannian centroid problem on an SPD manifold for several batch sizes and evaluate the relationship between the number of steps $K$ and batch size $b$.
Section \ref{sec:conclusion} concludes the paper.

\begin{table}[htbp]
\centering
\caption{Assumptions used in existing convergence analyses given by \cite{bonnabel2013stochastic} and our novel convergence analyses
(Theorems \ref{thm:Lip-cnv-cns}--\ref{thm:nLip-cnv-dim}). In all cases, $M$ is a Hadamard manifold, and $f: M \to \real$ is a smooth function.\label{tab:improvements}}
\begin{tabular}{cccc}
\bottomrule
\hline
\multirow{2}{*}{Theorem} &\multicolumn{3}{c}{Assumptions} \\
\cmidrule(lr){2-4}
& Step size $\alpha_k$ & Function $f$ & Sequence $x_k$ \\ \hline
 \toprule
\cite{bonnabel2013stochastic} & \multirow{2}{*}{Diminishing} & \multirow{2}{*}{--} & $(x_k)_{k=0}^{\infty} \subset C$ \\
Theorem 2& & & $C$: compact set \\ \hline
\cite{bonnabel2013stochastic} & Determined by & \multirow{2}{*}{--} & \multirow{2}{*}{--} \\
Theorem 3 & problem & \\ \hline
\multirow{2}{*}{Theorem \ref{thm:Lip-cnv-cns}} & Constant & \multirow{2}{*}{$L$-smooth} & \multirow{2}{*}{--} \\
& depending on $L$ & & \\ \hline
\multirow{2}{*}{Theorem \ref{thm:Lip-cnv-dim}} & Diminishing & \multirow{2}{*}{$L$-smooth} & \multirow{2}{*}{--} \\
& not depending on $L$ & \\ \hline
\multirow{2}{*}{Theorem \ref{thm:nLip-cnv-cns}} & \multirow{2}{*}{Constant} & \multirow{2}{*}{--} & $(x_k)_{k=0}^{\infty} \subset C$ \\
& & & $C$: bounded set \\ \hline
\multirow{2}{*}{Theorem \ref{thm:nLip-cnv-dim}} & \multirow{2}{*}{Diminishing} & \multirow{2}{*}{--} & $(x_k)_{k=0}^{\infty} \subset C$ \\
& & & $C$: bounded set \\ \hline
\toprule
\end{tabular}
\end{table}

\section{Mathematical Preliminaries}\label{sec:preliminaries}
Let $\real$ be the set of all real numbers and $\nat$ be the set of all natural numbers (i.e., positive integers).
We denote $[n] := \{1,2,\cdots,n\}$ $(n \in \nat)$, $\nat_0 := \nat \cup \{0\}$ and $\real_{++} := \{x \in \real : x > 0\}$.
Let $M$ be a Riemannian manifold and $T_xM$ be a tangent space at $x \in M$. An exponential map at $x \in M$,
written as $\Exp_x:T_xM \to M$, is a mapping from $T_xM$ to $M$ with the requirement that a vector $\xi_x \in T_xM$ is mapped to the point $y := \Exp_x(\xi_x) \in M$ such that
there exists a geodesic $c:[0,1] \to M$ that satisfies $c(0) = x$, $c(1) = y$, and $\dot{c}(0) = \xi_x$,
where $\dot{c}$ is the derivative of $c$ \cite{sakai1996riemannian}.
Let $\ip{\cdot}{\cdot}_x$ be a Riemannian metric at $x \in M$ and $\norm{\cdot}_x$ be the norm defined by the Riemann metric at $x \in M$.
Let $d(\cdot,\cdot) : M \times M \to \real_{++} \cup \{0\}$ be the distance function on $M$.
A complete simply connected Riemannian manifold of a nonpositive sectional curvature is called a Hadamard manifold.

\subsection{Riemannian stochastic optimization problem}
We define a Riemannian stochastic optimization problem and two standard conditions.
Given a data point $z$ in data domain $Z$, a Riemannian stochastic optimization problem provides a smooth loss function, $\ell(\cdot;z):M \rightarrow \real$.
We minimize the expected loss $f:M\rightarrow\real$ defined by
\begin{align}~\label{eq:objective}
f(x) := \epc_{z \sim \mathcal{D}}\left[\ell(x;z)\right] = \epc\left[\ell_\xi(x)\right],
\end{align}
where $\mathcal{D}$ is a probability distribution over $Z$, $\xi$ denotes a random variable with distribution function $P$, and $\epc[\cdot]$ denotes the expectation taken with respect to $\xi$.
We assume that an SFO exists such that, for a given $x \in M$, it returns stochastic gradient $\mathsf{G}_\xi(x)$ of function $f$ defined by \eqref{eq:objective},
where a random variable $\xi$ is supported on $\Xi$ independently of $x$.
In the discussions hereafter, standard conditions (C1) and (C2) are assumed:
\begin{enumerate}[label=(C\arabic*)]
\item Let $(x_k)_{k=0}^{\infty} \subset M$ be the sequence generated by the algorithm. For each iteration $k$,
\begin{align*}
\epc_{\xi_k}\left[\mathsf{G}_{\xi_k}(x_k)\right] = \grad f(x_k),
\end{align*}
where $\xi_0, \xi_1, \cdots$ are independent samples, and the random variable $\xi_k$ is independent of $(x_i)_{i=0}^k$. There exists a nonnegative constant $\sigma^2$ such that
\begin{align*}
\epc_{\xi_k}\left[ \norm{\mathsf{G}_{\xi_k}(x_k) - \grad f(x_k)}^2_{x_k} \right] \leq \sigma^2.
\end{align*}
\item For each iteration $k \in \nat_0$, the optimizer samples batch $B_k$ of size $b$ independently of $k$ and estimates the full gradient $\grad f$ as
\begin{align*}
\grad f_{B_k}(x_k) := \frac{1}{b}\sum_{i \in [b]}\mathsf{G}_{\xi_{k,i}}(x_k),
\end{align*}
where $\xi_{k,i}$ is a random variable generated by the $i$-th sampling in the $k$-th iteration.
\end{enumerate}

From (C1) and (C2), we immediately have
\begin{align}
\epc \left[ \grad f_{B_k}(x_k) \relmiddle{|} x_k \right] = \grad f(x_k), \label{eq:e_bgrad} \\
\epc \left[ \norm{\grad f_{B_k}(x_k) - \grad f(x_k)}^2_{x_k} \relmiddle{|} x_k \right] \leq \frac{\sigma^2}{b}. \label{eq:v_bgrad}
\end{align}

Bonnabel \cite{bonnabel2013stochastic} proposed using RSGD for solving Riemannian optimization problems.
In this paper, we use RSGD considering batch size, as shown in Algorithm \ref{alg:RSGD}.

\begin{algorithm}
\caption{Riemannian stochastic gradient descent \cite{bonnabel2013stochastic}. 
\label{alg:RSGD}}
\begin{algorithmic}[1]
\Require Initial point $x_0 \in M$, step sizes $(\alpha_k)_{k=0}^{\infty} \subset \real_{++}$, batch size $b \in \nat$.
\Ensure Sequence $(x_k)_{k=0}^{\infty} \subset M$.
\State $k \leftarrow 0$.
\Loop
\State $\eta_k := - \grad f_{B_k} (x_k) = -b^{-1}\sum_{i \in [b]}\mathsf{G}_{\xi_{k,i}}(x_k)$.
\State $x_{k+1} := \Exp_{x_k}(\alpha_k\eta_k)$.
\State $k \leftarrow k + 1$.
\EndLoop
\end{algorithmic}
\end{algorithm}

\subsection{Useful Lemma}
The following lemma plays an important role in our discussion of the convergence of Riemannian stochastic gradient descent on a Hadamard manifold in Section \ref{sec:analysis}.

\begin{lemma}\label{lem:sn_bgrad}
Suppose that (C1) and (C2) and $M$ define a Riemannian manifold and that $f : M \rightarrow \real$ is a smooth function on $M$. Then, the sequence $(x_k)_{k=0}^{\infty} \subset M$ generated by Algorithm \ref{alg:RSGD} satisfies
\begin{align*}
\epc\left[\norm{\grad f_{B_k}(x_k)}^2_{x_k}\relmiddle{|}x_k\right] \leq \frac{\sigma^2}{b} + \norm{\grad f(x_k)}^2_{x_k}
\end{align*}
for all $k \in \nat_0$.
\end{lemma}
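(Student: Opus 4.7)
The plan is to apply the standard bias--variance decomposition inside the tangent space $T_{x_k}M$, treating $\grad f(x_k)$ as deterministic once we condition on $x_k$ and treating $\grad f_{B_k}(x_k) - \grad f(x_k)$ as the mean-zero stochastic error. Concretely, I would write
\begin{align*}
\grad f_{B_k}(x_k) = \grad f(x_k) + \bigl(\grad f_{B_k}(x_k) - \grad f(x_k)\bigr),
\end{align*}
and then expand $\norm{\grad f_{B_k}(x_k)}^2_{x_k}$ using the inner product $\ip{\cdot}{\cdot}_{x_k}$ on $T_{x_k}M$. This gives three terms: the squared norm of the true gradient, the squared norm of the stochastic error, and twice the inner product between them.

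Next, I would take the conditional expectation $\epc[\,\cdot \mid x_k]$ of each of the three terms in turn. The first term $\norm{\grad f(x_k)}^2_{x_k}$ is $x_k$-measurable and passes through the conditional expectation unchanged. The cross term vanishes: since $\grad f(x_k)$ is $x_k$-measurable, one can pull it out of the inner product, and then \eqref{eq:e_bgrad} from condition (C1) together with (C2) gives $\epc[\grad f_{B_k}(x_k) - \grad f(x_k) \mid x_k] = 0$. The remaining variance term is bounded by $\sigma^2/b$ via \eqref{eq:v_bgrad}.

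Assembling the three pieces yields exactly the claimed inequality
\begin{align*}
\epc\!\left[\norm{\grad f_{B_k}(x_k)}^2_{x_k}\relmiddle{|}x_k\right] \leq \frac{\sigma^2}{b} + \norm{\grad f(x_k)}^2_{x_k}.
\end{align*}

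There is no real obstacle here beyond verifying that the usual Euclidean bias--variance argument carries over verbatim; the only point that deserves a brief sentence in the writeup is that all vectors involved lie in the single tangent space $T_{x_k}M$ (because $\mathsf{G}_{\xi_{k,i}}(x_k)$, $\grad f(x_k)$, and hence $\grad f_{B_k}(x_k)$ are all elements of $T_{x_k}M$), so the Riemannian inner product $\ip{\cdot}{\cdot}_{x_k}$ behaves exactly as a Euclidean inner product for the purposes of expanding the square. The manifold structure plays no role in this particular lemma; it is used only to make sense of $\grad f$ and the stochastic gradients as tangent vectors.
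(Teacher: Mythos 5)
Your proposal is correct and follows essentially the same route as the paper: both decompose $\grad f_{B_k}(x_k)$ as $\grad f(x_k)$ plus the mean-zero error, expand the squared norm in $T_{x_k}M$, kill the cross term using \eqref{eq:e_bgrad}, and bound the variance term by $\sigma^2/b$ via \eqref{eq:v_bgrad}. Your remark that all vectors live in the single tangent space $T_{x_k}M$, so the argument is verbatim the Euclidean one, is a worthwhile clarification that the paper leaves implicit.
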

\begin{proof}
Using \eqref{eq:e_bgrad} and \eqref{eq:v_bgrad}, we obtain
\begin{align*}
\epc\left[\norm{\grad f_{B_k}(x_k)}^2_{x_k}\relmiddle{|}x_k\right] &= \epc\left[\norm{\grad f_{B_k}(x_k)-\grad f(x_k)+ f(x_k)}^2_{x_k}\relmiddle{|}x_k\right] \\
&= \epc\left[\norm{\grad f_{B_k}(x_k)-\grad f(x_k)}^2_{x_k}\relmiddle{|}x_k\right] \\
&\quad + 2\epc\left[ \ip{\grad f_{B_k}(x_k)-\grad f(x_k)}{\grad f(x_k)}_{x_k} \relmiddle{|}x_k\right] \\
&\quad + \epc\left[\norm{\grad f(x_k)}^2_{x_k} \relmiddle{|}x_k\right] \\
&\leq \frac{\sigma^2}{b} + \norm{\grad f(x_k)}^2_{x_k}
\end{align*}
for all $k \in \nat_0$.
\end{proof}

Lemma \ref{lem:seq-liminf} is useful in showing convergence of the limit inferior.
\begin{lemma}\label{lem:seq-liminf}
The sequences $(\alpha_k)_{k=0}^{\infty} \subset \real_{++}$ and  $(\beta_k)_{k=0}^{\infty} \subset \real$ such that
\begin{align*}
\sum_{k=0}^{+\infty}\alpha_k = +\infty, \quad \sum_{k=0}^{+\infty}\alpha_k\beta_k < +\infty
\end{align*}
satisfy
\begin{align*}
\liminf_{k \to +\infty} \beta_k \leq 0.
\end{align*}
\end{lemma}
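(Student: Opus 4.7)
The plan is to argue by contradiction. Suppose, contrary to the conclusion, that $\liminf_{k \to +\infty} \beta_k > 0$. Then there exist $\varepsilon > 0$ and $N \in \nat$ such that $\beta_k \geq \varepsilon$ for every $k \geq N$; this is the defining property of the limit inferior. The goal is to turn this uniform lower bound on the eventual behavior of $\beta_k$ into a contradiction with the summability hypothesis $\sum_{k=0}^{+\infty} \alpha_k \beta_k < +\infty$.

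Given the lower bound on the tail of $(\beta_k)$, I would use positivity of $\alpha_k$ to obtain a termwise inequality $\alpha_k \beta_k \geq \varepsilon \alpha_k$ for all $k \geq N$. Summing from $N$ onwards then yields
\[
\sum_{k=N}^{+\infty} \alpha_k \beta_k \;\geq\; \varepsilon \sum_{k=N}^{+\infty} \alpha_k.
\]
Since $\sum_{k=0}^{+\infty} \alpha_k = +\infty$ and the first $N$ terms are finite, the right-hand side is $+\infty$. On the other hand, removing finitely many terms from a convergent series leaves a convergent (hence finite) tail, so the left-hand side is finite by hypothesis. This contradiction forces $\liminf_{k \to +\infty} \beta_k \leq 0$.

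The only subtlety worth flagging is the interpretation of the hypothesis $\sum \alpha_k \beta_k < +\infty$: since $\beta_k$ may take negative values, this should be read as saying that the partial sums converge to a finite real number, so that in particular the tail $\sum_{k=N}^{+\infty} \alpha_k \beta_k$ is well-defined and finite. Under that reading, the argument above is routine, and there is no real obstacle; the proof amounts to a clean pigeonhole-style use of $\sum \alpha_k = +\infty$. If one prefers a direct (non-contradictory) presentation, the same estimate shows that for any $\varepsilon > 0$ the set $\{k : \beta_k < \varepsilon\}$ must be $\alpha_k$-unbounded, which immediately gives $\liminf \beta_k \leq \varepsilon$ for every $\varepsilon > 0$ and hence $\liminf \beta_k \leq 0$.
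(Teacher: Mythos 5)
Your proof is correct. The paper states Lemma \ref{lem:seq-liminf} without proof, so there is nothing to compare against; your contradiction argument (a tail bound $\beta_k \geq \varepsilon$ forces $\sum_{k \geq N} \alpha_k \beta_k \geq \varepsilon \sum_{k \geq N} \alpha_k = +\infty$) is the standard and expected one, and your remark on interpreting $\sum \alpha_k \beta_k < +\infty$ as boundedness of the partial sums is exactly the right reading for how the lemma is invoked later (e.g., in the proof of Theorem \ref{thm:nLip-cnv-dim}, where the terms $\alpha_k V_k(x)$ need not be nonnegative).
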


Zhang and Sra developed the following lemma \cite{zhang2016first}.
\begin{lemma}\label{lem:Alexandrov}
Let $a$, $b$, and $c$ be the side lengths of a geodesic triangle in a Riemannian manifold with sectional curvature lower bounded by $\kappa$, and let $\theta$ be the angle between sides $b$ and $c$. Then,
\begin{align*}
a^2 \leq \zeta(\kappa, c)b^2 + c^2 - 2bc\cos(\theta),
\end{align*}
where $\zeta : \real \times \real_{++} \to \real_{++}$ is defined as
\begin{align*}
\zeta(\kappa, c) := \frac{\sqrt{\abs{\kappa}}c}{\tanh(\sqrt{\abs{\kappa}c})}.
\end{align*}
\end{lemma}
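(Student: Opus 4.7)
The plan is a two-step reduction: invoke Toponogov's hinge comparison theorem to pass from $M$ to a constant-curvature model space, and then verify the inequality in that model by an explicit analytic argument. Because $M$ has sectional curvature bounded below by $\kappa$, Toponogov's hinge theorem gives $a \leq \tilde{a}$, where $\tilde{a}$ is the length of the third side of the comparison hinge with sides of length $b$ and $c$ meeting at angle $\theta$, constructed in the simply connected space form $M_\kappa$ of constant sectional curvature $\kappa$. Hence it suffices to prove
\begin{align*}
\tilde{a}^2 \leq \zeta(\kappa, c)\, b^2 + c^2 - 2bc\cos\theta.
\end{align*}
For $\kappa = 0$ this is just the Euclidean law of cosines, since $\zeta(0, c) = \lim_{\mu \to 0^+} \mu c/\tanh(\mu c) = 1$.

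Now suppose $\kappa < 0$, and set $\mu := \sqrt{\abs{\kappa}}$. In $M_\kappa$ the side $\tilde{a}$ obeys the hyperbolic law of cosines
\begin{align*}
\cosh(\mu \tilde{a}) = \cosh(\mu b)\cosh(\mu c) - \sinh(\mu b)\sinh(\mu c)\cos\theta.
\end{align*}
Viewing $\tilde{a}$ as a smooth function of $b$ with $c$ and $\theta$ held fixed, implicit differentiation at $b=0$ gives $\tilde{a}(0) = c$ and $\tilde{a}'(0) = -\cos\theta$, so $\tilde{a}(b)^2$ and the candidate upper bound $G(b) := \zeta(\kappa, c)\, b^2 + c^2 - 2bc\cos\theta$ agree in value and first derivative at $b = 0$. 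Differentiating the law of cosines a second time yields the identity
\begin{align*}
\tilde{a}''(b) = \mu\coth(\mu \tilde{a}(b))\bigl(1 - (\tilde{a}'(b))^2\bigr),
\end{align*}
which, after rearrangement, gives $\tfrac{d^2}{db^2}\tilde{a}^2 = 2(\tilde{a}')^2 + 2\zeta(\kappa, \tilde{a})\bigl(1 - (\tilde{a}')^2\bigr)$. The plan is then to combine this with the observation $\abs{\tilde{a}'(b)} \leq 1$ (also forced by the law of cosines) and the monotonicity of $s \mapsto \zeta(\kappa, s)$ to derive $G(b) \geq \tilde{a}(b)^2$ for all $b \geq 0$.

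The delicate step is the last one. The target coefficient $\zeta(\kappa, c)$ depends only on the initial distance $c$, but the second-derivative identity for $\tilde{a}^2$ naturally involves $\zeta(\kappa, \tilde{a}(b))$, and $\tilde{a}(b)$ can exceed $c$. A naive Taylor-with-remainder integration therefore produces only the weaker bound in which $\zeta(\kappa, c)$ is replaced by $\zeta(\kappa, \max_{s \in [0,b]} \tilde{a}(s))$. Closing this gap by exploiting the algebraic interplay between the \emph{deficit} $1 - (\tilde{a}')^2 \geq 0$ and the \emph{excess} $\zeta(\kappa, \tilde{a}) - 1 \geq 0$, so as to recover the sharp coefficient $\zeta(\kappa, c)$, is the heart of the Zhang--Sra argument and the main obstacle in a self-contained proof. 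The alternative intrinsic route, via a Hessian-comparison bound of the form $\operatorname{Hess}[\tfrac{1}{2} d(\cdot, Q)^2] \preceq \zeta(\kappa, d(\cdot, Q))\,\mathrm{I}$ (itself a consequence of the Jacobi field comparison theorem), runs into exactly the same obstruction and is resolved by the same algebraic trick.
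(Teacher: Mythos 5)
First, note that the paper contains no proof of Lemma~\ref{lem:Alexandrov}: it is imported verbatim from Zhang and Sra \cite{zhang2016first}, so there is no in-paper argument to compare against. Judged on its own terms, your write-up is a correct and carefully motivated \emph{reduction}, but it is not a proof. The Toponogov hinge comparison $a \leq \tilde{a}$, the Euclidean case $\zeta(0,c)=1$, the initial data $\tilde{a}(0)=c$ and $\tilde{a}'(0)=-\cos\theta$, the Lipschitz bound $\abs{\tilde{a}'}\leq 1$, and the identity $\tfrac{d^2}{db^2}\tilde{a}^2 = 2(\tilde{a}')^2 + 2\zeta(\kappa,\tilde{a})\bigl(1-(\tilde{a}')^2\bigr)$ are all correct. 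But the one step that carries the entire content of the lemma --- establishing $\tilde{a}(b)^2 \leq \zeta(\kappa,c)b^2 + c^2 - 2bc\cos\theta$ with the coefficient frozen at the \emph{initial} distance $c$ --- is announced as ``the plan'' and then, in your own final paragraph, conceded to be an unresolved obstacle. That concession is accurate: the route you sketch (pointwise second-derivative comparison plus monotonicity of $s\mapsto\zeta(\kappa,s)$) cannot work as stated, because monotonicity runs the wrong way once $\tilde{a}(s)$ exceeds $c$ (as it does, e.g., for $\theta$ near $\pi$), and integrating the Taylor remainder with the crude bound $\tilde{a}(s)\leq c+s$ leaves an uncancelled positive cubic term. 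So the lemma is set up but not proved.

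What is missing is precisely the quantitative trade-off you gesture at: the factor $\zeta(\kappa,\tilde{a}(s))$, which can be large, is always multiplied by the deficit $1-(\tilde{a}'(s))^2$, which is small exactly when $\tilde{a}$ is growing fast, and one must show these effects balance so that the accumulated second-order contribution never exceeds $\zeta(\kappa,c)b^2$. Zhang and Sra carry this out by working directly with the hyperbolic law of cosines and a chain of elementary inequalities for hyperbolic functions rather than by the differential-inequality route, and reproducing that computation (or an equivalent one) is unavoidable if you want a self-contained proof. As written, your proposal identifies the right strategy and the right difficulty but stops exactly where the proof begins; the honest alternatives are either to supply that final estimate or to do what the paper does and cite \cite{zhang2016first} for the whole statement.
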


\section{Convergence of Riemannian Stochastic Gradient Descent on Hadamard Manifold}
\label{sec:analysis}
In this section, we describe four types of convergence analysis of Algorithm \ref{alg:RSGD} on a Hadamard manifold.

\subsection{Convergence of Riemannian Stochastic Gradient Descent with $L$-smoothness}
\label{sec:analysis-cns}
First, we describe the convergence of Algorithm \ref{alg:RSGD} when $L$-smoothness is assumed.
We start by defining the $L$-smoothness of the smooth function \cite{zhang2016first, huang2015broyden}.

\begin{definition}[smoothness]~\label{dfn:Lip} Let $M$ be a Hadamard manifold and $f : M \rightarrow \real$ be a smooth function on $M$.
For a positive number $L \in \real_{++}$, $f$ is said to be geodesically $L$-smooth if for any $x, y \in M$,
\begin{align*}
\norm{\grad f(x) - \Gamma_y^x(\grad f(y))}_x \leq L \norm{\Exp^{-1}_x(y)}_x,
\end{align*}
where $\Gamma_y^x : T_yM \to T_xM$ is the parallel transport from $y$ to $x$.
\end{definition}

We state the following lemma giving the necessary conditions for $L$-smooth \cite{zhang2016first}.
Lemma \ref{lem:Lip} plays an important role in convergence analysis with $L$-smoothness.

\begin{lemma}~\label{lem:Lip}
Let $M$ be a Hadamard manifold and $f : M \rightarrow \real$ be a smooth function on $M$. If $f$ is geodesically $L$-smooth, it follows that for all $x, y \in M$,
\begin{align*}
f(y) &\leq f(x) + \ip{\grad f(x)}{\Exp_x^{-1}(y)}_x + \frac{L}{2}\norm{\Exp_x^{-1}(y)}_x.
\end{align*}
\end{lemma}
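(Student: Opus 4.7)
The plan is to prove this by the standard Riemannian descent-lemma argument: integrate $(d/dt)f(\gamma(t))$ along the minimizing geodesic from $x$ to $y$, identify the linear-in-$\grad f(x)$ term via parallel transport, and control the remainder by the $L$-smoothness hypothesis. (Note that the right-hand side should read $\tfrac{L}{2}\|\Exp_x^{-1}(y)\|_x^{2}$; the squared norm is what the argument actually yields.)

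First I would invoke the Cartan--Hadamard theorem so that the exponential map at $x$ is a diffeomorphism, and take the unique minimizing geodesic $\gamma:[0,1]\to M$ with $\gamma(0)=x$, $\gamma(1)=y$, and $\dot\gamma(0)=\Exp_x^{-1}(y)$. Since $\gamma$ is a geodesic, $\dot\gamma$ is parallel along $\gamma$, which gives two facts I will use repeatedly: $\dot\gamma(t)=\Gamma_x^{\gamma(t)}(\Exp_x^{-1}(y))$, and $\|\dot\gamma(t)\|_{\gamma(t)}=\|\Exp_x^{-1}(y)\|_x$ for every $t\in[0,1]$. Moreover $d(\gamma(t),x)=t\|\Exp_x^{-1}(y)\|_x$, and hence $\|\Exp_{\gamma(t)}^{-1}(x)\|_{\gamma(t)}=t\|\Exp_x^{-1}(y)\|_x$.

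Next I would apply the fundamental theorem of calculus to the smooth function $t\mapsto f(\gamma(t))$ to obtain
\begin{align*}
f(y)-f(x)=\int_{0}^{1}\ip{\grad f(\gamma(t))}{\dot\gamma(t)}_{\gamma(t)}\,dt,
\end{align*}
and then split the integrand by adding and subtracting $\Gamma_x^{\gamma(t)}(\grad f(x))$. Since parallel transport is a linear isometry, the ``constant'' piece simplifies as
\begin{align*}
\ip{\Gamma_x^{\gamma(t)}(\grad f(x))}{\dot\gamma(t)}_{\gamma(t)}=\ip{\grad f(x)}{\Exp_x^{-1}(y)}_{x},
\end{align*}
which integrates to $\ip{\grad f(x)}{\Exp_x^{-1}(y)}_x$ and matches the linear term in the claim.

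The remaining step, which is the only place where $L$-smoothness enters, is to bound the ``error'' integrand. By Cauchy--Schwarz and Definition \ref{dfn:Lip} applied to the pair $\gamma(t),x$,
\begin{align*}
\abs{\ip{\grad f(\gamma(t))-\Gamma_x^{\gamma(t)}(\grad f(x))}{\dot\gamma(t)}_{\gamma(t)}}
\leq L\,\|\Exp_{\gamma(t)}^{-1}(x)\|_{\gamma(t)}\,\|\dot\gamma(t)\|_{\gamma(t)}
= L t\,\|\Exp_x^{-1}(y)\|_x^{2},
\end{align*}
and integrating over $[0,1]$ gives the $\tfrac{L}{2}\|\Exp_x^{-1}(y)\|_x^{2}$ term. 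The only conceptual subtlety--and the main thing to get right--is the bookkeeping of parallel transport and the fact that $\Gamma$ is a linear isometry between the relevant tangent spaces; once that is in place, the rest is a one-line Cauchy--Schwarz calculation.
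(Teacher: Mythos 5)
The paper gives no proof of this lemma, stating it as a known result cited from \cite{zhang2016first}; your argument is exactly the standard proof of that cited result (integrate $\frac{d}{dt}f(\gamma(t))$ along the geodesic, split off the parallel-transported gradient at $x$, and bound the remainder via Cauchy--Schwarz and geodesic $L$-smoothness), and every step checks out, including the global definition of $\Exp_x^{-1}$ via Cartan--Hadamard and the identity $\|\Exp_{\gamma(t)}^{-1}(x)\|_{\gamma(t)} = t\|\Exp_x^{-1}(y)\|_x$. You are also right that the last term in the statement should be $\frac{L}{2}\|\Exp_x^{-1}(y)\|_x^2$ rather than $\frac{L}{2}\|\Exp_x^{-1}(y)\|_x$; this is a typo in the paper, as confirmed by the way the lemma is invoked in the proof of Lemma \ref{lem:Lip-cnv}, where the term $\frac{L\alpha_k^2}{2}\|\grad f_{B_k}(x_k)\|_{x_k}^2$ can only arise from the squared norm.
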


To show the main result of this section (i.e., Theorems \ref{thm:Lip-cnv-cns} and \ref{thm:Lip-cnv-dim}), we present the following lemma, which plays a central role.

\begin{lemma}~\label{lem:Lip-cnv}
Let $M$ be a Hadamard manifold and $f : M \rightarrow \real$ be a smooth function.
We assume that $f$ is geodesically $L$-smooth and bounded below by $f_\star \in \real$.
Then, the sequence $(x_k)_{k=0}^{\infty} \subset M$ generated by Algorithm \ref{alg:RSGD} satisfies
\begin{align*}
\sum_{k = 0}^{K-1}\alpha_k\left(1-\frac{L\alpha_k}{2}\right)\epc\left[\norm{\grad f(x_k)}^2_{x_k}\right] \leq f(x_0)-f_\star + \frac{L\sigma^2}{2b}\sum_{k=0}^{K-1}\alpha_k^2
\end{align*}
for all $K \in \nat$.
\end{lemma}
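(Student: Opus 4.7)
The plan is to exploit the $L$-smoothness (Lemma \ref{lem:Lip}) to get a one-step descent inequality in expectation, then telescope. The key observation is that the RSGD update gives $\Exp_{x_k}^{-1}(x_{k+1}) = \alpha_k \eta_k = -\alpha_k \grad f_{B_k}(x_k)$, so Lemma \ref{lem:Lip} applied with $x = x_k$, $y = x_{k+1}$ yields
\begin{align*}
f(x_{k+1}) \leq f(x_k) - \alpha_k \ip{\grad f(x_k)}{\grad f_{B_k}(x_k)}_{x_k} + \frac{L\alpha_k^2}{2}\norm{\grad f_{B_k}(x_k)}_{x_k}^2.
\end{align*}

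Next I would take the conditional expectation given $x_k$. By \eqref{eq:e_bgrad} the cross term collapses to $-\alpha_k \norm{\grad f(x_k)}_{x_k}^2$, and for the quadratic term I invoke Lemma \ref{lem:sn_bgrad} to bound $\epc[\norm{\grad f_{B_k}(x_k)}_{x_k}^2 \mid x_k]$ by $\sigma^2/b + \norm{\grad f(x_k)}_{x_k}^2$. Collecting the $\norm{\grad f(x_k)}_{x_k}^2$ terms produces the factor $\alpha_k(1 - L\alpha_k/2)$:
\begin{align*}
\epc\left[f(x_{k+1}) \relmiddle{|} x_k\right] \leq f(x_k) - \alpha_k\left(1-\frac{L\alpha_k}{2}\right)\norm{\grad f(x_k)}_{x_k}^2 + \frac{L\alpha_k^2 \sigma^2}{2b}.
\end{align*}

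I would then take the total expectation, rearrange to put the gradient-norm term on the left, and sum from $k=0$ to $K-1$. The telescoping on the right gives $f(x_0) - \epc[f(x_K)]$, which is bounded above by $f(x_0) - f_\star$ using the lower-boundedness assumption, and the remaining $\sigma^2$ terms assemble into $(L\sigma^2/(2b))\sum_{k=0}^{K-1}\alpha_k^2$, yielding the claim.

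No single step looks like a real obstacle: the argument is the standard SGD-style descent lemma transplanted to the Riemannian setting via the exponential map, and $L$-smoothness plus Hadamard geometry (so that $\Exp^{-1}$ is globally defined) make every quantity well-posed. The only mild subtlety is being careful that the conditional expectation is taken with $x_k$ fixed so that $\grad f(x_k)$ comes out of $\epc[\cdot \mid x_k]$, and reading the $\frac{L}{2}\norm{\Exp_x^{-1}(y)}_x$ in Lemma \ref{lem:Lip} as the squared norm (as required for the standard descent inequality).
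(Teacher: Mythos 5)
Your proof follows the paper's argument exactly: apply Lemma \ref{lem:Lip} to the update $x_{k+1} = \Exp_{x_k}(\alpha_k\eta_k)$, take $\epc[\cdot\,|\,x_k]$ using \eqref{eq:e_bgrad} for the cross term and Lemma \ref{lem:sn_bgrad} for the quadratic term, then take total expectation, telescope, and bound $\epc[f(x_K)]$ below by $f_\star$. Your remark that the $\frac{L}{2}\norm{\Exp_x^{-1}(y)}_x$ term in Lemma \ref{lem:Lip} must be read as a squared norm is also correct; that is a typo in the paper's statement, and the proof of the lemma uses the squared version.
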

\begin{proof}
From the $L$-smoothness of $f$, and $x_{k+1} = \Exp_{x_k}(\alpha_k\eta_k)$, we have
\begin{align}
f(x_{k+1}) \leq f(x_k) -\alpha_k \ip{\grad f(x_k)}{\grad f_{B_k}(x_k)}_{x_k} + \frac{L\alpha_k^2}{2}\norm{\grad f_{B_k}(x_k)}^2_{x_k} \label{eq:tmp_Ls}
\end{align}
for all $k \in \nat_0$. From \eqref{eq:e_bgrad}, we obtain
\begin{align}~\label{eq:tmp-ip-g-gb}
\epc\left[\ip{\grad f(x_k)}{\grad f_{B_k}(x_k)}_{x_k}\relmiddle{|}x_k\right] &= \ip{\grad f(x_k)}{\epc\left[\grad f_{B_k}(x_k)\relmiddle{|}x_k\right]}_{x_k} \nonumber \\
&= \norm{\grad f(x_k)}^2_{x_k}
\end{align}
for all $k \in \nat_0$. Hence, by taking $\epc[\cdot|x_k]$ of both sides of \eqref{eq:tmp_Ls}, we obtain
\begin{align*}
\epc\left[f(x_{k+1})\relmiddle{|} x_k \right] &\leq \epc\left[f(x_k)\relmiddle{|} x_k\right] - \alpha_k \epc\left[\ip{\grad f(x_k)}{\grad f_{B_k}(x_k)}_{x_k}\relmiddle{|} x_k\right] \\
&\quad + \frac{L\alpha_k^2}{2}\epc\left[\norm{\grad f_{B_k}(x_k)}^2_{x_k}\relmiddle{|} x_k\right] \\
&\leq f(x_k) -\alpha_k\norm{\grad f(x_k)}^2 + \frac{L\alpha_k^2}{2}\left(\frac{\sigma^2}{b} + \norm{\grad f(x_k)}^2_{x_k}\right) \\
&= f(x_k) + \left(\frac{L\alpha_k}{2}-1\right)\alpha_k\norm{\grad f(x_k)}_{x_k}^2 + \frac{L\sigma^2\alpha_k^2}{2b}
\end{align*}
for all $k \in \nat_0$, where the second inequality comes from \eqref{eq:tmp-ip-g-gb} and Lemma \ref{lem:sn_bgrad}.
Moreover, by taking $\epc[\cdot]$ of both sides, we obtain
\begin{align*}
\epc\left[f(x_{k+1})\right] \leq \epc\left[f(x_k)\right] + \left(\frac{L\alpha_k}{2}-1\right)\alpha_k\epc\left[\norm{\grad f(x_k)}_{x_k}^2\right] + \frac{L\sigma^2\alpha_k^2}{2b}
\end{align*}
for all $k \in \nat_0$. By summing up the above inequalities from $k = 0$ to $k=K-1$, we obtain
\begin{align*}
\sum_{k=0}^{K-1}\left(1-\frac{L\alpha_k}{2}\right)\alpha_k\epc\left[\norm{\grad f(x_k)}_{x_k}^2\right] &\leq f(x_0)-\epc[f(x_K)] + \frac{L\sigma^2}{2b}\sum_{k=0}^{K-1}\alpha_k^2 \\
&\leq f(x_0)-f_\star + \frac{L\sigma^2}{2b}\sum_{k=0}^{K-1}\alpha_k^2
\end{align*}
for all $K \in \nat$. This completes the proof.
\end{proof}

Using Lemma \ref{lem:Lip-cnv}, we present a convergence analysis with a constant step size and the assumption of $L$-smoothness.

\begin{theorem}~\label{thm:Lip-cnv-cns}
Let $M$ be a Hadamard manifold and $f : M \rightarrow \real$ be a smooth function.
We assume that $f$ is geodesically $L$-smooth and bounded below by $f_\star \in \real$.
If a constant step size $\alpha_k := \alpha$ $(k \in \nat_0)$ satisfies $0 < \alpha < 2 / L$, the sequence $(x_k)_{k=0}^{\infty} \subset M$ generated by Algorithm \ref{alg:RSGD} satisfies that for some $C_1,C_2 \in \real_{++}$,
\begin{align}~\label{eq:Lip-cnv-cns-sum}
\frac{1}{K}\sum_{k=0}^{K-1}\epc\left[\norm{\grad f(x_k)}_{x_k}^2\right] \leq \frac{C_1}{K} + \frac{C_2\sigma^2}{b},
\end{align}
for all $K \in \nat$.
\end{theorem}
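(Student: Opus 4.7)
The plan is to obtain the theorem as a direct corollary of Lemma \ref{lem:Lip-cnv} by specializing the step size and dividing out a positive factor. Since all the hard work (applying $L$-smoothness, taking conditional expectations, and telescoping) has already been packaged in Lemma \ref{lem:Lip-cnv}, the remaining task is purely algebraic.

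First, I would substitute $\alpha_k = \alpha$ into Lemma \ref{lem:Lip-cnv}. Because $\alpha$ and the quantity $1 - L\alpha/2$ no longer depend on $k$, they factor out of the sum on the left-hand side, and the $\sum_{k=0}^{K-1}\alpha_k^2$ on the right-hand side collapses to $K\alpha^2$. This produces
\begin{align*}
\alpha\left(1-\frac{L\alpha}{2}\right)\sum_{k=0}^{K-1}\epc\left[\norm{\grad f(x_k)}^2_{x_k}\right] \leq f(x_0) - f_\star + \frac{LK\sigma^2\alpha^2}{2b}.
\end{align*}

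Next, I would use the hypothesis $0 < \alpha < 2/L$ to conclude that $1 - L\alpha/2 > 0$, so that dividing both sides by the strictly positive number $K\alpha(1 - L\alpha/2)$ preserves the inequality and yields
\begin{align*}
\frac{1}{K}\sum_{k=0}^{K-1}\epc\left[\norm{\grad f(x_k)}^2_{x_k}\right] \leq \frac{f(x_0)-f_\star}{K\alpha\left(1-\frac{L\alpha}{2}\right)} + \frac{L\alpha\sigma^2}{2b\left(1-\frac{L\alpha}{2}\right)}.
\end{align*}
Finally, I would identify
\begin{align*}
C_1 := \frac{f(x_0)-f_\star}{\alpha\left(1-\frac{L\alpha}{2}\right)}, \qquad C_2 := \frac{L\alpha}{2\left(1-\frac{L\alpha}{2}\right)},
\end{align*}
both of which lie in $\real_{++}$ (using $f(x_0) \geq f_\star$ together with the step-size bound; the case $f(x_0) = f_\star$ is trivial because then $x_0$ is already a minimizer), to recover exactly the form \eqref{eq:Lip-cnv-cns-sum}.

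There is essentially no conceptual obstacle here: the only point that requires any care is checking that the constant factor $1 - L\alpha/2$ is strictly positive so that the division is legitimate and the resulting constants $C_1, C_2$ are positive. All the analytic content, particularly the $L$-smoothness descent inequality, the conditional expectation manipulations, and the mini-batch variance bound from Lemma \ref{lem:sn_bgrad}, has already been absorbed into Lemma \ref{lem:Lip-cnv}.
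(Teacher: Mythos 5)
Your proposal is correct and follows exactly the paper's own proof: specialize Lemma \ref{lem:Lip-cnv} to the constant step size, use $0<\alpha<2/L$ to get $1-L\alpha/2>0$, divide through by $K\alpha(1-L\alpha/2)$, and read off $C_1=\frac{2(f(x_0)-f_\star)}{(2-L\alpha)\alpha}$ and $C_2=\frac{L\alpha}{2-L\alpha}$, which match the paper's constants after clearing the factor of $2$. Your aside about the degenerate case $f(x_0)=f_\star$ (where $C_1$ would be $0$ rather than strictly positive) is a point the paper glosses over as well and does not affect the validity of the bound.
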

\begin{proof}
From Lemma \ref{lem:Lip-cnv}, we have
\begin{align*}
\sum_{k = 0}^{K-1}\alpha\left(1-\frac{L\alpha}{2}\right)\epc\left[\norm{\grad f(x_k)}_{x_k}^2\right] \leq f(x_0)-f_\star + \frac{L\sigma^2}{2b}\sum_{k=0}^{K-1}\alpha^2
\end{align*}
for all $K \in \nat$. Moreover, from $0 < \alpha < 2 / L$, we have
\begin{align*}
0 < \frac{L\alpha}{2} < 1,
\end{align*}
which implies that
\begin{align*}
\frac{1}{K}\sum_{k=0}^{K-1}\epc\left[\norm{\grad f(x_k)}_{x_k}^2\right] \leq \underbrace{\frac{2(f(x_0)-f_\star)}{(2-L\alpha)\alpha}}_{C_1} \cdot \frac{1}{K} + \underbrace{\frac{L\alpha}{(2-L\alpha)}}_{C_2} \cdot \frac{\sigma^2}{b},
\end{align*}
for all $K \in \nat$. This completes the proof.
\end{proof}

Using Lemma \ref{lem:Lip-cnv}, we present a convergence analysis with a diminishing step size and the assumption of $L$-smoothness.

\begin{theorem}~\label{thm:Lip-cnv-dim}
Let $M$ be a Hadamard manifold and $f : M \rightarrow \real$ be a smooth function.
We assume that $f$ is geodesically $L$-smooth and bounded below by $f_\star \in \real$.
We consider a diminishing step size $(\alpha_k)_{k=0}^{\infty} \subset \real_{++}$ that satisfies
\begin{align}~\label{eq:diminishing-rule}
\sum_{k=0}^{+\infty} \alpha_k = +\infty, \quad \sum_{k=0}^{+\infty} \alpha_k^2 < +\infty.
\end{align}
Then, the sequence $(x_k)_{k=0}^{\infty} \subset M$ generated by Algorithm \ref{alg:RSGD} satisfies
\begin{align}~\label{eq:Lip-cnv-dim-linimf}
\liminf_{k \to +\infty} \epc\left[\norm{\grad f(x_k)}_{x_k}\right] = 0.
\end{align}
If a diminishing step size $(\alpha_k)_{k=0}^{\infty} \subset (0,1)$ is monotonically decreasing\footnote{In this case, $(\alpha_k)_{k=0}^{\infty} \subset (0,1)$ need not satisfy the conditions \eqref{eq:diminishing-rule}.},
then for all $K \in \nat$,
\begin{align} \label{eq:Lip-cnv-dim-sum}
\frac{1}{K}\sum_{k=0}^{K-1}\epc\left[\norm{\grad f(x_k)}_{x_k}^2\right] \leq \left(C_1 + \frac{C_2\sigma^2}{b}\sum_{k=0}^{K-1}\alpha_k^2\right)\frac{1}{K\alpha_{K-1}}
\end{align}
for some $C_1,C_2 \in \real_{++}$.
\end{theorem}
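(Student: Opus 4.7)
The overall strategy is to deduce both conclusions from the master inequality established in Lemma \ref{lem:Lip-cnv},
\begin{equation*}
\sum_{k=0}^{K-1}\alpha_k\left(1 - \frac{L\alpha_k}{2}\right)\epc\left[\norm{\grad f(x_k)}^2_{x_k}\right] \leq f(x_0) - f_\star + \frac{L\sigma^2}{2b}\sum_{k=0}^{K-1}\alpha_k^2,
\end{equation*}
by combining it with the appropriate structural property of $(\alpha_k)$ for each part, and then, where needed, transferring from squared norm to norm via the Jensen bound $\epc[\norm{\grad f(x_k)}_{x_k}] \leq \sqrt{\epc[\norm{\grad f(x_k)}^2_{x_k}]}$.

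For \eqref{eq:Lip-cnv-dim-linimf}, I would first observe that $\sum_k \alpha_k^2 < +\infty$ forces $\alpha_k \to 0$, so that $1 - L\alpha_k/2 \geq 1/2$ for all sufficiently large $k$. Letting $K \to +\infty$ in the master inequality then gives
\begin{equation*}
\sum_{k=0}^{+\infty}\alpha_k\,\epc\left[\norm{\grad f(x_k)}^2_{x_k}\right] < +\infty.
\end{equation*}
Since $\sum_k \alpha_k = +\infty$, Lemma \ref{lem:seq-liminf} applied to the nonnegative sequence $\beta_k := \epc[\norm{\grad f(x_k)}^2_{x_k}]$ yields $\liminf_k \beta_k = 0$, and Jensen's inequality transfers this conclusion to the first-power norm.

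For \eqref{eq:Lip-cnv-dim-sum}, monotonicity of $(\alpha_k)$ gives $\alpha_k \geq \alpha_{K-1}$ for every $k \leq K-1$, and there is a constant $\gamma \in (0,1]$ such that $1 - L\alpha_k/2 \geq \gamma$ for all $k$ (obtained under the compatibility $L\alpha_0 < 2$, which the constants $C_1, C_2$ will absorb). Hence the left-hand side of the master inequality is at least $\gamma\,\alpha_{K-1}\sum_{k=0}^{K-1}\beta_k$, and dividing both sides by $\gamma K \alpha_{K-1}$ produces precisely the bound \eqref{eq:Lip-cnv-dim-sum} with $C_1 = (f(x_0)-f_\star)/\gamma$ and $C_2 = L/(2\gamma)$.

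The main obstacle I anticipate is the bookkeeping around the factor $1 - L\alpha_k/2$ in the second part: the stated hypothesis $(\alpha_k)\subset(0,1)$ does not on its own guarantee positivity when $L \geq 2$, so one must implicitly take $\alpha_0$ small enough relative to $L$. Once that is granted, both parts reduce to algebraic manipulation of the master inequality followed by, in the first part, an application of Lemma \ref{lem:seq-liminf} and Jensen's inequality.
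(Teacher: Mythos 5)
Your argument for \eqref{eq:Lip-cnv-dim-linimf} is correct and is essentially the paper's: use $\alpha_k\to 0$ to find $k_0$ with $1-L\alpha_k/2$ bounded below for $k\geq k_0$, conclude from Lemma \ref{lem:Lip-cnv} that $\sum_{k}\alpha_k\epc[\norm{\grad f(x_k)}_{x_k}^2]<+\infty$, and finish with Lemma \ref{lem:seq-liminf} and Jensen's inequality.

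The second half, however, has a genuine gap that you flagged but did not close. Your derivation of \eqref{eq:Lip-cnv-dim-sum} requires $1-L\alpha_k/2\geq\gamma>0$ for \emph{every} $k$, i.e.\ $L\alpha_0<2$, which is not among the hypotheses: the theorem only assumes $(\alpha_k)\subset(0,1)$ monotonically decreasing. The constants $C_1,C_2$ cannot ``absorb'' this, because if $1-L\alpha_k/2\leq 0$ for some $k$, the corresponding term on the left of the master inequality has the wrong sign and the lower bound $\gamma\alpha_{K-1}\sum_{k=0}^{K-1}\beta_k\leq\sum_{k=0}^{K-1}\alpha_k(1-L\alpha_k/2)\beta_k$ is simply false; rescaling constants cannot repair a sign. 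The paper circumvents this without the extra hypothesis: it applies the lower bound $\alpha_k(1-L\alpha_k/2)\geq\alpha_{K-1}(1-L\alpha_{k_0}/2)$ only to the indices $k\geq k_0$, moves the finitely many initial terms to the right-hand side via $\alpha_k(L\alpha_k/2-1)\epc[\norm{\grad f(x_k)}_{x_k}^2]\leq L\alpha_k^2\,\epc[\norm{\grad f(x_k)}_{x_k}^2]$, and then adds $\sum_{k=0}^{k_0-1}\epc[\norm{\grad f(x_k)}_{x_k}^2]$ into $C_1$ (folded under the factor $1/(K\alpha_{K-1})$ using $\alpha_{K-1}<1$) so that the final bound covers the full average over $k=0,\dots,K-1$. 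To make your cleaner one-line estimate valid you would have to either add the hypothesis $\alpha_0<2/L$ or reproduce this splitting. (A separate caveat: in this second part the footnote drops \eqref{eq:diminishing-rule}, so even the existence of an index $k_0$ with $\alpha_{k_0}<2/L$ is not automatic; your proposal at least makes the needed extra assumption explicit, but it is an assumption the theorem as stated does not grant.)
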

\begin{proof}
From \eqref{eq:diminishing-rule}, we obtain
\begin{align}~\label{eq:tmp-seq-a}
\sum_{k = 0}^{+\infty}\alpha_k\left(1-\frac{L\alpha_k}{2}\right) = +\infty.
\end{align}
In addition, from \eqref{eq:diminishing-rule}, $(\alpha_k)_{k=0}^{\infty}$ satisfies $\alpha_k \to 0$ $(k \to +\infty)$. This implies that there exists a natural number $k_0 \in \nat_0$ such that, for all $k \in \nat_0$, if $k \geq k_0$, then $0 < \alpha_k < 2 / L$. Therefore, we obtain
\begin{align*}
0 < 1-\frac{L\alpha_k}{2} < 1,
\end{align*}
which, together with Lemma \ref{lem:Lip-cnv}, means that
\begin{align}~\label{eq:tmp-bound}
&\sum_{k = k_0}^{K-1}\alpha_k\left(1-\frac{L\alpha_k}{2}\right)\epc\left[\norm{\grad f(x_k)}_{x_k}^2\right] \nonumber \\
&\leq f(x_0)-f_\star + \frac{L\sigma^2}{2b}\sum_{k=0}^{K-1}\alpha_k^2 - \sum_{k = 0}^{k_0 - 1}\alpha_k\left(1-\frac{L\alpha_k}{2}\right)\epc\left[\norm{\grad f(x_k)}_{x_k}^2\right]
\end{align}
for all $K \geq k_0 + 1$.
This implies
\begin{align}~\label{eq:tmp-seq-b}
\sum_{k = 0}^{+\infty}\alpha_k\left(1-\frac{L\alpha_k}{2}\right)\epc\left[\norm{\grad f(x_k)}_{x_k}^2\right] < +\infty.
\end{align}
By applying Lemma \ref{lem:seq-liminf} with \eqref{eq:tmp-seq-a} and \eqref{eq:tmp-seq-b}, we have that
\begin{align*}
\liminf_{k \to +\infty} \epc\left[\norm{\grad f(x_k)}_{x_k}^2\right] \leq 0,
\end{align*}
which, together with the convexity of $\norm{\cdot}_{x_k}$, means that
\begin{align*}
0 \leq \liminf_{k \to +\infty} \left(\epc\left[\norm{\grad f(x_k)}_{x_k}\right]\right)^2 \leq \liminf_{k \to +\infty} \epc\left[\norm{\grad f(x_k)}_{x_k}^2\right] \leq 0.
\end{align*}
This implies that
\begin{align*}
\liminf_{k \to +\infty} \epc\left[\norm{\grad f(x_k)}_{x_k}\right] = 0
\end{align*}
and ensures that \eqref{eq:Lip-cnv-dim-linimf} follows from the above discussion.

Furthermore, we show that \eqref{eq:Lip-cnv-dim-sum} follows from the above discussion. From the monotonicity of $(\alpha_k)_{k=0}^{\infty} \subset (0,1)$ and \eqref{eq:tmp-bound}, we have
\begin{align*}
&\alpha_{K-1}\left(1-\frac{L\alpha_{k_0}}{2}\right)\sum_{k = k_0}^{K-1}\epc\left[\norm{\grad f(x_k)}_{x_k}^2\right] \\
&\leq f(x_0)-f_\star + \frac{L\sigma^2}{2b}\sum_{k=0}^{K-1}\alpha_k^2 + \sum_{k = 0}^{k_0 - 1}\alpha_k\left(\frac{L\alpha_k}{2}-1\right)\epc\left[\norm{\grad f(x_k)}_{x_k}^2\right] \\
&\leq f(x_0)-f_\star + \frac{L\sigma^2}{2b}\sum_{k=0}^{K-1}\alpha_k^2 + \sum_{k = 0}^{k_0 - 1}L\alpha_k^2\epc\left[\norm{\grad f(x_k)}_{x_k}^2\right]
\end{align*}
for all $K\in\nat$. Hence, for all $K\in\nat$,
\begin{align*}
\sum_{k = k_0}^{K-1}\epc\left[\norm{\grad f(x_k)}_{x_k}^2\right] &\leq \frac{2\left(f(x_0)-f_\star+\sum_{k=0}^{k_0-1}L\alpha_k^2\epc\left[\norm{\grad f(x_k)}_{x_k}^2\right]\right)}{(2-L\alpha_{k_0})\alpha_{K-1}} \\
&\quad + \frac{L\sigma^2}{b(2-L\alpha_{k_0})\alpha_{K-1}}\sum_{k=0}^{K-1}\alpha_k^2,
\end{align*}
which, together with $\alpha_k \in (0,1)$ $(k \in \nat_0)$, gives us
\begin{align*}
&\frac{1}{K}\sum_{k = k_0}^{K-1}\epc\left[\norm{\grad f(x_k)}_{x_k}^2\right] \\
&\leq \underbrace{\left\{\frac{2\left(f(x_0)-f_\star+\sum_{k=0}^{k_0-1}L\alpha_k^2\epc\left[\norm{\grad f(x_k)}_{x_k}^2\right]\right)}{2-L\alpha_{k_0}} 
+ \sum_{k=0}^{k_0-1}\epc\left[\norm{\grad f(x_k)}_{x_k}^2\right]\right\}}_{C_1}\frac{1}{K\alpha_{K-1}} \\
& \quad + \underbrace{\frac{L}{2-L\alpha_{k_0}}}_{C_2} \cdot \frac{\sigma^2}{bK\alpha_{K-1}}\sum_{k=0}^{K-1}\alpha_k^2
\end{align*}
for all $K \in \nat$. This completes the proof.
\end{proof}

\subsection{Convergence of Riemannian Stochastic Gradient Descent without $L$-smoothness}
\label{sec:analysis-dim}
Next, we describe the convergence of Algorithm \ref{alg:RSGD} without the assumption of $L$-smoothness.
We start by reconsidering the definition of convergence of Algorithm \ref{alg:RSGD}.
N{\'e}meth \cite{nemeth2003variational} developed the variational inequality problem on a Hadamard manifold.
Motivated by the variational inequality problem in Euclidean space \cite{iiduka2022critical, iiduka2022theoretical}, we consider the following proposition.

\begin{proposition}\label{prp:variational}
Let $M$ be a Riemannian manifold and $f : M \rightarrow \real$ be a smooth function. Then, a stationary point $x \in M$ of $f$ satisfies
\begin{align*}
\grad f(x) = 0 \Leftrightarrow \forall y \in V_x, \, \ip{\grad f(x)}{-\Exp^{-1}_x(y)}_x \leq 0,
\end{align*}
where $V_x$ is a neighborhood of $x \in M$ such that $\Exp_x^{-1} : V_x \rightarrow T_xM$ is defined.
\end{proposition}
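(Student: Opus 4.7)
The plan is to prove the equivalence by handling the two directions separately, with essentially all of the content concentrated in the reverse direction.

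For the $(\Rightarrow)$ implication, the argument is immediate: if $\grad f(x) = 0$, then $\ip{\grad f(x)}{-\Exp_x^{-1}(y)}_x = 0$ for every $y \in V_x$, so the required inequality holds trivially with equality, and no further work is needed.

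For the $(\Leftarrow)$ implication, the strategy is to feed a single judiciously chosen test point $y \in V_x$ into the hypothesis, namely a point along the curve of steepest descent from $x$. Since $V_x$ is a neighborhood of $x$ on which $\Exp_x^{-1}$ is defined, continuity of $\Exp_x$ at the origin of $T_xM$ guarantees the existence of $t_0 > 0$ such that $y_t := \Exp_x(-t\,\grad f(x)) \in V_x$ for every $t \in (0, t_0)$. For such $t$ one has $\Exp_x^{-1}(y_t) = -t\,\grad f(x)$, and substituting $y_t$ into the hypothesis gives
\[
0 \;\geq\; \ip{\grad f(x)}{-\Exp_x^{-1}(y_t)}_x \;=\; t\,\norm{\grad f(x)}_x^2.
\]
Dividing by $t > 0$ forces $\norm{\grad f(x)}_x = 0$, and therefore $\grad f(x) = 0$.

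No substantial obstacle is expected; the proof reduces to a single evaluation of the hypothesis along the geodesic ray $t \mapsto \Exp_x(-t\,\grad f(x))$. The only delicate point is justifying that $y_t$ lies in $V_x$ for sufficiently small $t$, which is handled by combining $\Exp_x(0) = x$ with the assumption that $V_x$ is a neighborhood of $x$.
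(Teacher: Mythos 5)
Your proof is correct and follows essentially the same route as the paper: the forward direction is trivial, and the reverse direction tests the hypothesis at $y = \Exp_x(-t\,\grad f(x))$ for small $t>0$ to extract $t\norm{\grad f(x)}_x^2 \leq 0$. Your justification that $y_t \in V_x$ for small $t$ via continuity of $\Exp_x$ is slightly more explicit than the paper's ``choose $\varepsilon$ sufficiently small,'' but the argument is identical in substance.
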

\begin{proof}
If $x \in M$ satisfies $\grad f(x) = 0$, we have
\begin{align*}
\ip{\grad f(x)}{-\Exp_x^{-1}(y)}_x &= \ip{0_x}{-\Exp_x^{-1}(y)}_x \leq 0
\end{align*}
for all $y \in V_x$. We assume that $x \in M$ satisfies $\ip{\grad f(x)}{-\Exp_x^{-1}(y)}_x \leq 0$ for all $y \in M$.
Let $y := \Exp_x(-\varepsilon\grad f(x))$, for which we choose a sufficiently small $\varepsilon > 0$ such that $y \in V_x$. We then have
\begin{align*}
\ip{\grad f(x)}{-\Exp_x^{-1}(y)}_x &= \ip{\grad f(x)}{-\Exp_x^{-1}(\Exp_x(-\varepsilon\grad f(x)))}_x \\
&= \ip{\grad f(x)}{\varepsilon\grad f(x)}_x \\
&= \varepsilon\norm{\grad f(x)}^2_x \leq 0.
\end{align*}
This implies that $\grad f(x) = 0$ and completes the proof.
\end{proof}

Note that, for a Hadamard manifold, $V_x = M$. From Proposition \ref{prp:variational}, we use the performance measure of the sequence $(x_k)_{k = 0}^\infty$,
\begin{align*}
V_k(x) := \epc\left[ \ip{\grad f(x_k)}{-\Exp^{-1}_{x_k}(x)}_{x_k} \right],
\end{align*}
for all $x \in M$. In practice, we use $V_k(x)$ for showing the convergence of Algorithm \ref{alg:RSGD} in Theorems \ref{thm:nLip-cnv-cns} and \ref{thm:nLip-cnv-dim}.
To show the main result of this section (i.e., Theorems \ref{thm:nLip-cnv-cns} and \ref{thm:nLip-cnv-dim}), we present the following lemma, which plays a central role.

\begin{lemma}~\label{lem:nLip-cnv}
Let $M$ be a Hadamard manifold with sectional curvature lower bounded by $\kappa$ and $f : M \rightarrow \real$ be a smooth function.
Then, the sequence $(x_k)_{k=0}^{\infty} \subset M$ generated by Algorithm \ref{alg:RSGD} satisfies that, for all $K \in \nat$ and $x \in M$,
\begin{align*}
&\sum_{k=0}^{K-1}\alpha_k V_k(x) \leq \frac{\zeta\left(\kappa, D(x)\right)}{2}\sum_{k=0}^{K-1}\alpha_k^2\left(\frac{\sigma^2}{b} + \epc\left[\norm{\grad f(x_k)}^2_{x_k}\right]\right) \\
&\quad + \frac{1}{2}\left(\epc\left[\norm{\Exp^{-1}_{x_0}(x)}_{x_0}^2\right] - \epc\left[\norm{\Exp^{-1}_{x_K}(x)}_{x_K}^2\right] \right),
\end{align*}
where $\zeta : \real_{++} \times \real_{++} \to \real_{++}$ is defined as Lemma \ref{lem:Alexandrov}.
\end{lemma}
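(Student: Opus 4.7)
The plan is to derive a one-step Fej\'er-type distance recursion from the Alexandrov-type comparison in Lemma \ref{lem:Alexandrov} and then telescope after taking expectations. Fix $k \in \nat_0$ and $x \in M$, and consider the geodesic triangle with vertices $x_k$, $x_{k+1}$, and $x$, with $\theta$ denoting the angle at $x_k$. I would set $a = d(x_{k+1}, x) = \norm{\Exp_{x_{k+1}}^{-1}(x)}_{x_{k+1}}$, $b = d(x_k, x_{k+1}) = \alpha_k \norm{\grad f_{B_k}(x_k)}_{x_k}$, and $c = d(x_k, x) = \norm{\Exp_{x_k}^{-1}(x)}_{x_k}$. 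Since $x_{k+1} = \Exp_{x_k}(-\alpha_k \grad f_{B_k}(x_k))$, the definition of $\theta$ in terms of the inner product at $x_k$ gives
\[
bc \cos\theta = \ip{\Exp_{x_k}^{-1}(x_{k+1})}{\Exp_{x_k}^{-1}(x)}_{x_k} = -\alpha_k \ip{\grad f_{B_k}(x_k)}{\Exp_{x_k}^{-1}(x)}_{x_k},
\]
so Lemma \ref{lem:Alexandrov} delivers the pathwise bound
\[
\norm{\Exp_{x_{k+1}}^{-1}(x)}_{x_{k+1}}^{2} \leq \norm{\Exp_{x_k}^{-1}(x)}_{x_k}^{2} + \zeta(\kappa, c)\alpha_k^{2}\norm{\grad f_{B_k}(x_k)}_{x_k}^{2} + 2\alpha_k \ip{\grad f_{B_k}(x_k)}{\Exp_{x_k}^{-1}(x)}_{x_k}.
\]

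Next I would replace the random coefficient $\zeta(\kappa, c)$ by the deterministic $\zeta(\kappa, D(x))$, using the monotonicity of $s \mapsto s/\tanh s$ on $\real_{++}$ (hence of $\zeta(\kappa, \cdot)$) together with the assumption that $D(x)$ upper-bounds $d(x_k, x)$ for every $k$. Taking $\epc[\,\cdot \mid x_k]$, condition (C1) turns the cross term into $-2\alpha_k V_k(x)$ after invoking the definition of $V_k$, while Lemma \ref{lem:sn_bgrad} dominates the expected gradient-norm term by $\sigma^{2}/b + \norm{\grad f(x_k)}_{x_k}^{2}$. Taking unconditional expectation and rearranging then gives
\[
2\alpha_k V_k(x) \leq \epc\!\left[\norm{\Exp_{x_k}^{-1}(x)}_{x_k}^{2}\right] - \epc\!\left[\norm{\Exp_{x_{k+1}}^{-1}(x)}_{x_{k+1}}^{2}\right] + \zeta(\kappa, D(x))\alpha_k^{2}\left(\frac{\sigma^{2}}{b} + \epc\!\left[\norm{\grad f(x_k)}_{x_k}^{2}\right]\right).
\]

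Summing from $k = 0$ to $K-1$ then telescopes the distance terms into $\epc[\norm{\Exp_{x_0}^{-1}(x)}_{x_0}^{2}] - \epc[\norm{\Exp_{x_K}^{-1}(x)}_{x_K}^{2}]$, and dividing by $2$ yields the claim. The only non-routine step is the treatment of the random coefficient $\zeta(\kappa, c)$: the Alexandrov inequality produces a factor that genuinely depends on the running distance $d(x_k, x)$, so one must invoke the monotonicity of $\zeta$ and the a priori upper bound $D(x)$ in order to extract it from the expectation and the sum. Everything else---the expansion of $bc\cos\theta$ in the tangent space at $x_k$, the application of (C1) and Lemma \ref{lem:sn_bgrad}, and the telescoping---is routine.
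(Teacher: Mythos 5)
Your proposal is correct and follows essentially the same route as the paper: the same geodesic triangle at $x_k$, the same application of Lemma \ref{lem:Alexandrov} with $\Exp_{x_k}^{-1}(x_{k+1}) = -\alpha_k \grad f_{B_k}(x_k)$, conditioning via \eqref{eq:e_bgrad} and Lemma \ref{lem:sn_bgrad}, and telescoping. The only difference is that you make explicit the replacement of the random coefficient $\zeta(\kappa, d(x_k,x))$ by $\zeta(\kappa, D(x))$ via monotonicity of $\zeta(\kappa,\cdot)$, a step the paper performs silently.
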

\begin{proof}
For arbitrary $x \in M$, we consider a geodesic triangle consisting of three points, $x_k$, $x_{k+1}$, and $x$. Let the length of each side be $a$, $b$, and $c$, respectively, such that
\begin{align}~\label{eq:abc}
\begin{cases}
a := d(x_{k+1}, x) \\
b := d(x_k, x_{k+1}) = \alpha_k\norm{\grad f_{B_k}(x_k)}_{x_k} \\
c := d(x_k, x).
\end{cases}
\end{align}
Let $\theta \in \real$ be the angle between sides $b$ and $c$. It then follows that
\begin{align*}
\cos(\theta) := \frac{\ip{\Exp^{-1}_{x_k}(x_{k+1})}{\Exp^{-1}_{x_k}(x)}_{x_k}}{\norm{\Exp^{-1}_{x_k}(x_{k+1})}_{x_k} \norm{\Exp^{-1}_{x_k}(x)}_{x_k}}.
\end{align*}
From Lemma \ref{lem:Alexandrov} with \eqref{eq:abc}, we have
\begin{align*}
\norm{\Exp^{-1}_{x_{k+1}}(x)}_{x_{k+1}}^2 &\leq \alpha_k^2\zeta\left(\kappa, D(x)\right)\norm{\grad f_{B_k}(x_k)}^2_{x_k} \\
& - 2\alpha_k\ip{\grad f_{B_k}(x_k)}{-\Exp^{-1}_{x_k}(x)}_{x_k} + \norm{\Exp^{-1}_{x_k}(x)}^2_{x_k}.
\end{align*}
By taking $\epc[\cdot|x_k]$ of both sides of this inequation, we obtain
\begin{align*}
\epc\left[\norm{\Exp^{-1}_{x_{k+1}}(x)}_{x_{k+1}}^2\relmiddle{|}x_k\right]
&\leq \alpha_k^2\zeta\left(\kappa, D(x)\right)\epc\left[\norm{\grad f_{B_k}(x_k)}^2_{x_k}\relmiddle{|}x_k\right] \\
& - 2\alpha_k\epc\left[\ip{\grad f_{B_k}(x_k)}{-\Exp^{-1}_{x_k}(x)}_{x_k}\relmiddle{|}x_k\right] \\
& + \epc\left[\norm{\Exp^{-1}_{x_k}(x)}^2_{x_k}\relmiddle{|}x_k\right] \\
&\leq \alpha_k^2\zeta\left(\kappa, D(x)\right)\left(\frac{\sigma^2}{b} + \norm{\grad f(x_k)}^2_{x_k}\right) \\
& - 2\alpha_k\ip{\grad f(x_k)}{-\Exp^{-1}_{x_k}(x)}_{x_k} \\
& + \norm{\Exp^{-1}_{x_k}(x)}^2_{x_k}
\end{align*}
for all $k \in \nat_0$, where the second inequality comes from Lemma \ref{lem:sn_bgrad}. Furthermore, by taking $\epc[\cdot]$ of both sides, we obtain
\begin{align}~\label{eq:tmp-e-triangle}
\epc\left[\norm{\Exp^{-1}_{x_{k+1}}(x)}_{x_{k+1}}^2\right] &\leq \alpha_k^2\zeta\left(\kappa, D(x)\right)\left(\frac{\sigma^2}{b} + \epc\left[\norm{\grad f(x_k)}^2_{x_k}\right]\right) \nonumber \\
& - 2\alpha_kV_k(x) + \epc\left[\norm{\Exp^{-1}_{x_k}(x)}^2_{x_k}\right]
\end{align}
for all $k \in \nat_0$. Hence,
\begin{align*}
\alpha_k V_k(x) &\leq \frac{1}{2}\left(\epc\left[\norm{\Exp^{-1}_{x_k}(x)}_{x_k}^2\right] - \epc\left[\norm{\Exp^{-1}_{x_{k+1}}(x)}_{x_{k+1}}^2\right] \right) \\
& + \frac{\alpha_k^2\zeta\left(\kappa, D(x)\right)}{2}\left(\frac{\sigma^2}{b} + \epc\left[\norm{\grad f(x_k)}^2_{x_k}\right]\right)
\end{align*}
for all $k \in \nat_0$. By summing up the above inequalities from $k = 0$ to $k=K-1$ $(K \in \nat)$, we obtain
\begin{align*}
\sum_{k=0}^{K-1}\alpha_k V_k(x) &\leq \frac{\zeta\left(\kappa, D(x)\right)}{2}\sum_{k=0}^{K-1}\alpha_k^2\left(\frac{\sigma^2}{b} + \epc\left[\norm{\grad f(x_k)}^2_{x_k}\right]\right) \\
& + \frac{1}{2}\left(\epc\left[\norm{\Exp^{-1}_{x_0}(x)}_{x_0}^2\right] - \epc\left[\norm{\Exp^{-1}_{x_K}(x)}_{x_K}^2\right] \right)
\end{align*}
for all $K \in \nat$, and this completes the proof.
\end{proof}

Here, we make the following assumptions:

\begin{assum}~\label{asm:egrad-bound}
Let $M$ be a Hadamard manifold, $f : M \rightarrow \real$ be a smooth function, and $(x_k)_{k=0}^{\infty} \subset M$ be a sequence generated by Algorithm \ref{alg:RSGD}.
\begin{enumerate}[label=(A\arabic*)]
\item We assume that there exists a positive number $G \in \real_{++}$ such that
\begin{align*}
\epc\left[\norm{\grad f(x_k)}_{x_k}\right] \leq G < +\infty
\end{align*}
for all $k \in \nat_0$.
\item We define $D: M \to \real$ as
\begin{align*}
D(x) := \sup\left\{\epc\left[d(x_k,x)\right] \in \real_{++} : k \in \nat_0 \right\}
\end{align*}
and assume that $D(x) < + \infty$ for all $x \in M$.
\end{enumerate}
\end{assum}

Using Lemma \ref{lem:nLip-cnv}, we present a convergence analysis with a constant step size.

\begin{theorem}~\label{thm:nLip-cnv-cns}
Suppose Assumption \ref{asm:egrad-bound}, and let $M$ be a Hadamard manifold with sectional curvature lower bounded by $\kappa$ and $f : M \rightarrow \real$ be a smooth function.
If we use a constant step size $\alpha_k := \alpha > 0$ $(k \in \nat_0)$, the sequence $(x_k)_{k=0}^{\infty} \subset M$ generated by Algorithm \ref{alg:RSGD} satisfies
\begin{align}\label{eq:nLip-cnv-cns-liminf}
\liminf_{k \to +\infty} V_k(x) \leq \left(\frac{\sigma^2}{b} + G^2\right)\alpha C
\end{align}
for some $C \in \real_{++}$. Moreover, for all $K \in \nat$ and $x \in M$,
\begin{align}~\label{eq:nLip-cnv-cns-sum}
\frac{1}{K}\sum_{k=0}^{K-1} V_k(x) \leq \left(\frac{\sigma^2}{b} + G^2\right)\alpha C_1 + \frac{C_2}{K}
\end{align}
for some $C_1,C_2\in\real_{++}$.
\end{theorem}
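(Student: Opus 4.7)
The plan is to apply Lemma \ref{lem:nLip-cnv} specialized to the constant step size $\alpha_k \equiv \alpha$, and then invoke Assumption \ref{asm:egrad-bound} to clean up the right-hand side. Substituting $\alpha_k = \alpha$ into Lemma \ref{lem:nLip-cnv} gives
\begin{align*}
\alpha\sum_{k=0}^{K-1} V_k(x) &\leq \frac{\zeta(\kappa,D(x))\alpha^{2}}{2}\sum_{k=0}^{K-1}\left(\frac{\sigma^{2}}{b}+\epc\left[\norm{\grad f(x_k)}^{2}_{x_k}\right]\right) \\
&\quad + \frac{1}{2}\left(\epc\left[\norm{\Exp^{-1}_{x_0}(x)}_{x_0}^{2}\right] - \epc\left[\norm{\Exp^{-1}_{x_K}(x)}_{x_K}^{2}\right]\right).
\end{align*}
From here the idea is to (i) discard the nonnegative term $\epc[\norm{\Exp^{-1}_{x_K}(x)}_{x_K}^{2}]$, (ii) bound the second moment of the gradient by $G^{2}$ using (A1), and (iii) replace $\epc[\norm{\Exp^{-1}_{x_0}(x)}_{x_0}^{2}]$ by $d(x_0,x)^{2}$, which is finite and further controlled via (A2). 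After these simplifications each summand on the right becomes a constant multiple of $\sigma^{2}/b + G^{2}$, so the sum telescopes trivially to a factor of $K$.

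Next I divide both sides by $K\alpha$ to obtain a bound of the form
\begin{equation*}
\frac{1}{K}\sum_{k=0}^{K-1} V_k(x) \leq \frac{\zeta(\kappa,D(x))\alpha}{2}\left(\frac{\sigma^{2}}{b}+G^{2}\right) + \frac{d(x_0,x)^{2}}{2\alpha K},
\end{equation*}
which yields \eqref{eq:nLip-cnv-cns-sum} upon setting $C_{1} := \zeta(\kappa,D(x))/2$ and $C_{2} := d(x_0,x)^{2}/(2\alpha)$, both strictly positive thanks to (A2) and the definition of $\zeta$.

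For the liminf inequality \eqref{eq:nLip-cnv-cns-liminf}, I will argue that if we had $\liminf_{k\to\infty} V_k(x) > (\sigma^{2}/b + G^{2})\alpha C_{1}$, then $V_k(x)$ would eventually exceed $(\sigma^{2}/b + G^{2})\alpha C_{1} + \varepsilon$ for some $\varepsilon > 0$, and dividing the running sum by $K$ and letting $K \to \infty$ would force $\liminf_{K\to\infty} (1/K)\sum_{k=0}^{K-1} V_k(x) \geq (\sigma^{2}/b + G^{2})\alpha C_{1} + \varepsilon$, contradicting the Cesàro bound just obtained (since $C_{2}/K \to 0$). Setting $C := C_{1}$ then gives \eqref{eq:nLip-cnv-cns-liminf}.

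This proof is essentially a bookkeeping exercise on top of Lemma \ref{lem:nLip-cnv}, so there is no deep obstacle; the one mildly delicate point is the passage from a bound on the Cesàro average to a bound on $\liminf V_k(x)$, which is why I will include the short contradiction argument above rather than conflate the two limits. A second subtlety worth flagging is the use of (A1) at the second-moment level, which should be interpreted as a uniform pointwise/almost-sure bound so that $\epc[\norm{\grad f(x_k)}^{2}_{x_k}] \leq G^{2}$ is legitimate; Jensen's inequality runs in the wrong direction here, so the writeup should be explicit about this reading of (A1).
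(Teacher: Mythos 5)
Your proof of the Ces\`aro bound \eqref{eq:nLip-cnv-cns-sum} is essentially the paper's: apply Lemma \ref{lem:nLip-cnv} with $\alpha_k=\alpha$, discard the nonpositive terminal term, bound the gradient second moments and the initial squared distance, and divide by $K\alpha$; your constants $C_1=\zeta(\kappa,D(x))/2$ and $C_2=d(x_0,x)^2/(2\alpha)$ match the paper's up to the numerator of $C_2$ (the paper uses $D(x)$ there, which is if anything less careful than your exact $d(x_0,x)^2$). Where you genuinely diverge is the liminf claim \eqref{eq:nLip-cnv-cns-liminf}: the paper proves it by a separate contradiction argument at the level of the one-step recursion \eqref{eq:tmp-e-triangle}, showing that if $V_k(x)$ stayed above the threshold then $\epc[\norm{\Exp^{-1}_{x_k}(x)}_{x_k}^2]$ would decrease by at least $\alpha\varepsilon$ per step and telescope to $-\infty$; you instead deduce it from the Ces\`aro bound via the elementary fact $\liminf_k a_k \le \liminf_K K^{-1}\sum_{k<K}a_k$ (valid here since each $V_k(x)$ is finite under (A1)--(A2)). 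Your route is shorter and reuses work already done, at the cost of obtaining the liminf only through the averaged inequality rather than exhibiting the explicit distance-decrease mechanism; both yield the same constant $C=C_1$. Your flag about (A1) is well taken and applies equally to the paper's own proof, which also invokes $\epc[\norm{\grad f(x_k)}_{x_k}^2]\le G^2$ even though (A1) as written only bounds the first moment and Jensen runs the wrong way; reading (A1) as an almost-sure (or second-moment) bound, as you propose, is the right fix.
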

\begin{proof}
If $\grad f(x_{k_0}) = 0$ for some $k_0 \in \nat_0$, then \eqref{eq:nLip-cnv-cns-liminf} follows.
Thus, it is sufficient to prove \eqref{eq:nLip-cnv-cns-liminf} only when $\grad f(x_k) \neq 0$ for all $k \in \nat_0$.
We assume that there exists a positive number $\varepsilon \in \real_{++}$ such that
\begin{align}~\label{eq:tmp-contradiction}
\liminf_{k \to +\infty} V_k(x) > \frac{\zeta\left(\kappa, D(x)\right)}{2}\left(\frac{\sigma^2}{b} + G^2\right)\alpha + \varepsilon
\end{align}
for all $x \in M$. Furthermore, from the definition of the limit inferior, there exists $k_0 \in \nat_0$ such that, for all $k \geq k_0$,
\begin{align*}
\liminf_{k \to +\infty} V_k(x) -\frac{\varepsilon}{2} < V_k(x),
\end{align*}
from which, together with \eqref{eq:tmp-contradiction}, we obtain
\begin{align*}
V_k(x) > \frac{\zeta\left(\kappa, D(x)\right)}{2}\left(\frac{\sigma^2}{b} + G^2\right)\alpha + \frac{\varepsilon}{2}
\end{align*}
for all $k \geq k_0$.
Here, from \eqref{eq:tmp-e-triangle} and $\alpha_k = \alpha$ $(k \in \nat_0)$, then for all $k \geq k_0$,
\begin{align*}
\epc\left[\norm{\Exp^{-1}_{x_{k+1}}(x)}_{x_{k+1}}^2\right]
&\leq \alpha^2\zeta\left(\kappa, D(x)\right)\left(\frac{\sigma^2}{b} + \epc\left[\norm{\grad f(x_k)}^2_{x_k}\right]\right) \\
& - 2\alpha V_k(x) + \epc\left[\norm{\Exp^{-1}_{x_k}(x)}^2_{x_k}\right], \\
& < \epc\left[\norm{\Exp^{-1}_{x_k}(x)}^2_{x_k}\right], \\
& + \alpha^2\zeta\left(\kappa, D(x)\right)\left(\frac{\sigma^2}{b} + G^2\right) \\
& - 2\alpha \left\{ \frac{\zeta\left(\kappa, D(x)\right)}{2}\left(\frac{\sigma^2}{b} + G^2\right)\alpha + \frac{\varepsilon}{2} \right\} \\
& = \epc\left[\norm{\Exp^{-1}_{x_k}(x)}^2_{x_k}\right] -\alpha\varepsilon.
\end{align*}
Hence,
\begin{align}\label{eq:tmp-edist-bound}
0 \leq \epc\left[\norm{\Exp^{-1}_{x_{k+1}}(x)}_{x_{k+1}}^2\right] < \epc\left[\norm{\Exp^{-1}_{x_{k_0}}(x)}^2_{x_{k_0}}\right] -\alpha\varepsilon (k + 1 - k_0).
\end{align}
When $k$ diverges to $+\infty$, the right side of \eqref{eq:tmp-edist-bound} diverges to $-\infty$. By contradiction, we have
\begin{align*}
\liminf_{k \to +\infty} V_k(x) \leq \underbrace{\frac{\zeta\left(\kappa, D(x)\right)}{2}}_{C}\left(\frac{\sigma^2}{b} + G^2\right)\alpha.
\end{align*}
Next, we show the upper bound of $(1/K)\sum_{k=0}^{K-1}V_k(x)$ such as expressed by \eqref{eq:nLip-cnv-cns-sum}.
From Lemma \ref{lem:nLip-cnv} with $\alpha_k = \alpha$ $(k \in \nat_0)$, we have that, for all $K \in \nat$,
\begin{align*}
\sum_{k=0}^{K-1}\alpha V_k(x) & \leq \frac{\zeta\left(\kappa, D(x)\right)}{2}\sum_{k=0}^{K-1}\alpha^2\left(\frac{\sigma^2}{b} + \epc\left[\norm{\grad f(x_k)}^2_{x_k}\right]\right) \\
&\quad + \frac{1}{2}\left(\epc\left[\norm{\Exp^{-1}_{x_0}(x)}_{x_0}^2\right] - \epc\left[\norm{\Exp^{-1}_{x_K}(x)}_{x_K}^2\right] \right) \\
& \leq \frac{\zeta\left(\kappa, D(x)\right)}{2}\left(\frac{\sigma^2}{b} + G^2\right)K\alpha^2 + \frac{D(x)}{2},
\end{align*}
which implies
\begin{align*}
\frac{1}{K}\sum_{k=0}^{K-1} V_k(x) \leq \underbrace{\frac{\zeta\left(\kappa, D(x)\right)}{2}}_{C_1}\left(\frac{\sigma^2}{b} + G^2\right)\alpha + \underbrace{\frac{D(x)}{2\alpha}}_{C_2}\cdot\frac{1}{K}
\end{align*}
for all $K \in \nat$. This completes the proof.
\end{proof}

Using Lemma \ref{lem:nLip-cnv}, we present a convergence analysis with a diminishing step size.

\begin{theorem}~\label{thm:nLip-cnv-dim}
Suppose Assumption \ref{asm:egrad-bound}, and let $M$ be a Hadamard manifold with sectional curvature lower bounded by $\kappa$ and $f : M \rightarrow \real$ be a smooth function.
If we use a diminishing step size $(\alpha_k)_{k=0}^{\infty} \subset \real_{++}$ such as \eqref{eq:diminishing-rule}, the sequence $(x_k)_{k=0}^{\infty} \subset M$ generated by Algorithm \ref{alg:RSGD} satisfies
\begin{align}~\label{eq:nLip-cnv-dim-liminf}
\liminf_{k \to +\infty} V_k(x) \leq 0.
\end{align}
If diminishing step size $(\alpha_k)_{k=0}^{\infty} \subset \real_{++}$ is monotone decreasing\footnote{In this case, $(\alpha_k)_{k=0}^{\infty} \subset \real_{++}$ need not satisfy the conditions \eqref{eq:diminishing-rule}.},
then for all $K \in \nat$ and $x \in M$,
\begin{align}~\label{eq:nLip-cnv-dim-sum}
\frac{1}{K}\sum_{k=0}^{K-1} V_k(x) \leq \left(\frac{\sigma^2}{b} + G^2\right)\frac{C_1}{K}\sum_{k=0}^{K-1}\alpha_k + \frac{C_2}{\alpha_{K-1}K}
\end{align}
for some $C_1,C_2 \in \real_{++}$.
\end{theorem}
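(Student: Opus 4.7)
The plan is to mimic the template of the proof of Theorem \ref{thm:Lip-cnv-dim}, but built on Lemma \ref{lem:nLip-cnv} in place of Lemma \ref{lem:Lip-cnv}. First I feed Assumption \ref{asm:egrad-bound} into the right-hand side of Lemma \ref{lem:nLip-cnv}: the expected gradient-norm-squared terms are bounded via (A1), while the telescoping distance difference $\epc[\norm{\Exp^{-1}_{x_0}(x)}_{x_0}^2] - \epc[\norm{\Exp^{-1}_{x_K}(x)}_{x_K}^2]$ is controlled by a finite quantity depending on $D(x)$ via (A2) together with the nonnegativity of the subtracted term. This produces the master inequality
\begin{align*}
\sum_{k=0}^{K-1}\alpha_k V_k(x) \leq \frac{\zeta(\kappa,D(x))}{2}\left(\frac{\sigma^2}{b} + G^2\right)\sum_{k=0}^{K-1}\alpha_k^2 + \tilde{D}(x),
\end{align*}
valid for every $K \in \nat$, from which both halves of the theorem will follow.

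For the liminf assertion \eqref{eq:nLip-cnv-dim-liminf}, the square-summability of $(\alpha_k)$ from \eqref{eq:diminishing-rule} makes the right-hand side of the master inequality bounded above uniformly in $K$. Hence the partial sums of $\sum_{k}\alpha_k V_k(x)$ are bounded above, while $\sum_k \alpha_k = +\infty$ by the other half of \eqref{eq:diminishing-rule}, so Lemma \ref{lem:seq-liminf} applied with $\beta_k := V_k(x)$ immediately yields $\liminf_{k\to +\infty} V_k(x) \leq 0$.

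For the monotone-decreasing assertion \eqref{eq:nLip-cnv-dim-sum} the diminishing rule is not assumed, so I exploit monotonicity twice: first, $\alpha_k^2 \leq \alpha_0 \alpha_k$ converts the $\sum \alpha_k^2$ in the master inequality into $\alpha_0\sum \alpha_k$; second, $\alpha_{K-1} \leq \alpha_k$ for every $k \leq K-1$ gives $\alpha_{K-1}\sum_{k=0}^{K-1}V_k(x) \leq \sum_{k=0}^{K-1}\alpha_k V_k(x)$. Dividing by $\alpha_{K-1} K$ then produces an inequality of the exact form \eqref{eq:nLip-cnv-dim-sum}, with $C_1$ reading off as $\zeta(\kappa,D(x))\alpha_0/2$ and $C_2$ absorbing $\tilde{D}(x)$.

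The main obstacle is the last step, $\alpha_{K-1}\sum V_k(x) \leq \sum \alpha_k V_k(x)$, which is valid only when $V_k(x) \geq 0$; a priori $V_k(x)$ may have either sign. I would resolve this in the spirit of the $k_0$-splitting argument used inside the proof of Theorem \ref{thm:Lip-cnv-dim}: peel off the finitely many initial indices where the sign may fail and absorb their contributions into $C_2$, or equivalently replace $V_k(x)$ by its positive part and observe that discarding nonpositive terms only strengthens the desired upper bound on $(1/K)\sum_{k=0}^{K-1}V_k(x)$.
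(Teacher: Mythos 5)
Your first half (the liminf claim) is correct and coincides with the paper's argument: the master inequality from Lemma \ref{lem:nLip-cnv} combined with (A1), the nonnegativity of $\epc[\norm{\Exp^{-1}_{x_K}(x)}_{x_K}^2]$, square-summability of the step sizes, and Lemma \ref{lem:seq-liminf} applied with $\beta_k := V_k(x)$.

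The second half has a genuine gap. The step $\alpha_{K-1}\sum_{k=0}^{K-1}V_k(x) \leq \sum_{k=0}^{K-1}\alpha_k V_k(x)$ requires $V_k(x)\geq 0$, and neither of your proposed repairs supplies it. The $k_0$-splitting inside the proof of Theorem \ref{thm:Lip-cnv-dim} works because the offending condition there ($\alpha_k \geq 2/L$) can fail only for finitely many initial $k$, since $\alpha_k \to 0$; here nothing forces $V_k(x)<0$ to occur only finitely often or only at the start, so peeling off initial indices does not apply. The positive-part variant also fails: to run your chain you would need an upper bound on $\sum_k \alpha_k V_k(x)^{+}$, but the master inequality bounds only $\sum_k \alpha_k V_k(x)$, and $\sum_k\alpha_k V_k(x)^{+}$ exceeds it by $\sum_k \alpha_k \max\{-V_k(x),0\}$, which is uncontrolled; discarding the nonpositive terms strengthens the bound on the quantity you want to estimate but weakens the inequality you have available to estimate it with. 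The paper sidesteps the sign issue entirely: writing $e_k := \epc[\norm{\Exp^{-1}_{x_k}(x)}^2_{x_k}]$, it divides the per-step inequality $\alpha_k V_k(x) \leq \tfrac{1}{2}(e_k - e_{k+1}) + \tfrac{1}{2}\zeta(\kappa,D(x))\,\alpha_k^2(\sigma^2/b + \epc[\norm{\grad f(x_k)}^2_{x_k}])$ by $\alpha_k$ \emph{before} summing. This turns the noise term into $\sum_k\alpha_k$ directly (so $C_1 = \zeta(\kappa,D(x))/2$, with no extra $\alpha_0$ factor), and moves all the difficulty into the weighted telescoping sum $\sum_{k=0}^{K-1}(e_k-e_{k+1})/\alpha_k$, which an Abel summation bounds by $D(x)/\alpha_{K-1}$ using that $e_k$ is nonnegative and bounded via (A2) and that monotonicity of $(\alpha_k)$ makes $1/\alpha_k - 1/\alpha_{k-1}\geq 0$. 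You should restructure the second half along these lines.
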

\begin{proof}
Using Lemma \ref{lem:nLip-cnv}, for all $K \in \nat$ and $x \in M$, we obtain
\begin{align*}
\sum_{k=0}^{K-1}\alpha_k V_k(x) \leq \frac{\zeta\left(\kappa, D(x)\right)}{2}\left(\frac{\sigma^2}{b} + G^2\right)\sum_{k=0}^{K-1}\alpha_k^2 + \frac{1}{2}\epc\left[\norm{\Exp^{-1}_{x_0}(x)}_{x_0}^2\right],
\end{align*}
from which, together with $\sum_{k=0}^{+\infty}\alpha_k < +\infty$, we obtain
\begin{align}~\label{eq:tmp-seq-c}
\sum_{k=0}^{K-1}\alpha_k V_k(x) < + \infty
\end{align}
for all $K \in \nat$. From Lemma \ref{lem:seq-liminf} with $\sum_{k=0}^{+\infty}\alpha_k = + \infty$ and \eqref{eq:tmp-seq-c}, we have that
\begin{align*}
\liminf_{k \to +\infty} V_k(x) \leq 0.
\end{align*}
This means that \eqref{eq:nLip-cnv-dim-liminf} follows.

Next, we show the upper bound of $(1/K)\sum_{k=0}^{K-1}V_k(x)$ such as \eqref{eq:nLip-cnv-dim-sum}. From Lemma \ref{lem:nLip-cnv}, we obtain
\begin{align*}
\frac{1}{K}\sum_{k=0}^{K-1}V_k(x) & \leq \frac{\zeta\left(\kappa, D(x)\right)}{2K}\left(\frac{\sigma^2}{b} + G^2\right)\sum_{k=0}^{K-1}\alpha_k \\
& + \frac{1}{2K}\underbrace{\sum_{k=0}^{K-1}\frac{\epc\left[\norm{\Exp^{-1}_{x_k}(x)}_{x_k}^2\right] - \epc\left[\norm{\Exp^{-1}_{x_{k+1}}(x)}_{x_{k+1}}^2\right]}{\alpha_k}}_{X_K(x)}
\end{align*}
for all $K \in \nat$. Hence,
\begin{align*}
X_K(x) &:= \frac{\epc\left[\norm{\Exp^{-1}_{x_0}(x)}_{x_0}^2\right]}{\alpha_0} + \sum_{k=1}^{K-1}\left(\frac{\epc\left[\norm{\Exp^{-1}_{x_k}(x)}_{x_k}^2\right]}{\alpha_k} - \frac{\epc\left[\norm{\Exp^{-1}_{x_k}(x)}_{x_k}^2\right]}{\alpha_{k-1}} \right) \\
& \quad - \frac{\epc\left[\norm{\Exp^{-1}_{x_K}(x)}_{x_K}^2\right]}{\alpha_{K-1}} \\
& \leq \frac{D(x)}{\alpha_0} + D(x)\sum_{k=0}^{K-1}\left(\frac{1}{\alpha_k} - \frac{1}{\alpha_{k-1}}\right) \\
& = \frac{D(x)}{\alpha_{K-1}},
\end{align*}
where the second inequality comes from the monotonicity of $(\alpha_k)_{k=0}^{\infty}$.
Therefore, it follows that
\begin{align*}
\frac{1}{K}\sum_{k=0}^{K-1}V_k(x) \leq
\underbrace{\frac{\zeta\left(\kappa, D(x)\right)}{2}}_{C_1}\left(\frac{\sigma^2}{b} + G^2\right)\frac{1}{K}\sum_{k=0}^{K-1}\alpha_k + \underbrace{\frac{D(x)}{2}}_{C_2} \cdot \frac{1}{\alpha_{K-1}K}
\end{align*}
for all $K \in \nat$ and $x \in M$. This completes the proof.
\end{proof}

\subsection{Convergence Rate of Practical Step Sizes}
\label{sec:rate}
Now we specifically calculate the convergence rate for practical step sizes.

First, we consider the constant step size defined as $\alpha_k := \alpha \in \real_{++}$. From Theorems \ref{thm:Lip-cnv-cns} and \ref{thm:nLip-cnv-cns},
we immediately obtain the convergence rate for the constant step size. Hence, the convergence rates with and without $L$-smoothness are
\begin{align*}
\mathcal{O}\left(\dfrac{1}{K}+\dfrac{\sigma^2}{b}\right)\text{, and }\mathcal{O}\left(\dfrac{1}{K}\right) + \left(\dfrac{\sigma^2}{b} + G^2\right)\alpha,
\end{align*}
respectively.

Next, we consider the diminishing step size defined as $\alpha_k := 1 / \sqrt{k + 1}$.
Substituting $\alpha_k := 1 / \sqrt{k + 1}$ for the right side of \eqref{eq:Lip-cnv-dim-sum}, we obtain
\begin{align*}
\left(C_1 + \frac{C_2\sigma^2}{b}\sum_{k=0}^{K-1}\alpha_k^2\right)\frac{1}{K\alpha_{K-1}} \leq \left(C_1 + \frac{C_2\sigma^2}{b}(1 + \log K)\right)\frac{1}{\sqrt{K}},
\end{align*}
where
\begin{align*}
\sum_{k=0}^{K-1}\frac{1}{K+1} \leq 1 + \int_1^K\frac{dt}{t} = 1 + \log K.
\end{align*}
Substituting $\alpha_k := 1 / \sqrt{k + 1}$ for the right side of \eqref{eq:nLip-cnv-dim-sum}, we obtain 
\begin{align}
\left(\frac{\sigma^2}{b} + G^2\right)\frac{C_1}{K}\sum_{k=0}^{K-1}\alpha_k + \frac{C_2}{\alpha_{K-1}K} \leq C_1\left(\frac{\sigma^2}{b} + G^2\right)\left(\frac{2}{\sqrt{K}} - \frac{1}{K}\right) + \frac{C_2}{\sqrt{K}}, \label{eq:sfo-temp1}
\end{align}
where
\begin{align*}
\frac{1}{K}\sum_{k=0}^{K-1}\frac{1}{\sqrt{K+1}} \leq \frac{1}{K}\left( 1 + \int_1^K\frac{dt}{\sqrt{t}}\right) = \frac{2}{\sqrt{K}} - \frac{1}{K}.
\end{align*}
Therefore, the convergence rates of $\alpha_k := 1 / \sqrt{k + 1}$ with and without $L$-smoothness are
\begin{align*}
\mathcal{O}\left(\dfrac{\log K}{\sqrt{K}}\right)\text{, and }\mathcal{O}\left(\left(1+\dfrac{\sigma^2}{b}\right)\dfrac{1}{\sqrt{K}}\right), 
\end{align*}
respectively.

Finally, we consider the diminishing step size defined as $\alpha_k := \alpha \gamma^{p_k}$, where $\alpha,\gamma \in (0,1)$, $n, T \in \nat$ and
\begin{align*}
p_k := \min\left\{\max\left\{m \in \nat_0 : m \leq \frac{k}{T}\right\}, n\right\}.
\end{align*}
This step size is explicitly represented as
\begin{align*}
& \underbrace{\alpha, \alpha, \cdots, \alpha}_{T},
\underbrace{\alpha\gamma, \alpha\gamma, \cdots, \alpha\gamma}_{T},
\underbrace{\alpha\gamma^2, \alpha\gamma^2, \cdots, \alpha\gamma^2}_{T}, \\
& \quad \cdots, \underbrace{\alpha\gamma^{n-1}, \alpha\gamma^{n-1}, \cdots, \alpha\gamma^{n-1}}_{T},
\alpha\gamma^n, \alpha\gamma^n, \cdots, 
\end{align*}
which means that $0 < \alpha\gamma^n \leq \alpha_k \leq \alpha$ for all $k \in \nat_0$.
Substituting $\alpha_k := \alpha \gamma^{p_k}$ for the right side of \eqref{eq:Lip-cnv-dim-sum}, we obtain
\begin{align*}
\left(C_1 + \frac{C_2\sigma^2}{b}\sum_{k=0}^{K-1}\alpha_k^2\right)\frac{1}{K\alpha_{K-1}}
&\leq \frac{C_1}{\alpha\gamma^nK} + \frac{C_2\alpha\sigma^2}{\gamma^nb}.
\end{align*}
Substituting $\alpha_k := \alpha \gamma^{p_k}$ for the right side of \eqref{eq:nLip-cnv-dim-sum}, we obtain
\begin{align}
\left(\frac{\sigma^2}{b} + G^2\right)\frac{C_1}{K}\sum_{k=0}^{K-1}\alpha_k + \frac{C_2}{\alpha_{K-1}K} \leq \left(\frac{\sigma^2}{b} + G^2\right)C_1\alpha + \frac{C_2}{\alpha\gamma^nK}. \label{eq:sfo-temp2}
\end{align}
Therefore, the convergence rates of $\alpha_k := \alpha \gamma^{p_k}$ with and without $L$-smoothness are
\begin{align*}
\mathcal{O}\left(\dfrac{1}{K}+\dfrac{\sigma^2}{b}\right)\text{, and }\mathcal{O}\left(\dfrac{1}{K}\right) + \mathcal{O}\left(\left(\dfrac{\sigma^2}{b} + G^2\right)\alpha\right), 
\end{align*}
respectively.

We summarize the convergence rates of practical step sizes in Table \ref{tab:rate}
It shows that increasing the batch size improves RSGD performance and that the constant $\alpha$ in learning rates should be sufficiently small.
\begin{table}[htbp]
\caption{Convergence rates of three practical step sizes with and without assumption of $L$-smoothness ($\gamma \in (0, 1)$, $G, \alpha > 0$, $\sigma^2 \geq 0$, and $b$ is batch size). \label{tab:rate}}
\centering
\begin{tabular}{ccc}
\bottomrule
\hline
\multirow{2}{*}{Step size $\alpha_k$} &\multicolumn{2}{c}{Convergence rate} \\
\cmidrule(lr){2-3}
& with $L$-smooth & without $L$-smooth \\ \hline
\multirow{2}{*}{$\alpha_k = \alpha$} & \multirow{3}{*}{$\mathcal{O}\left(\dfrac{1}{K}+\dfrac{\sigma^2}{b}\right)$} & \multirow{3}{*}{$\mathcal{O}\left(\dfrac{1}{K}\right) + \left(\dfrac{\sigma^2}{b} + G^2\right)\alpha$} \\
 &  &  \\
(Constant) & & \\ \hline
 \multirow{2}{*}{$\alpha_k = 1/\sqrt{k+1}$} & \multirow{3}{*}{$\mathcal{O}\left(\dfrac{\log K}{\sqrt{K}}\right)$} & \multirow{3}{*}{$\mathcal{O}\left(\left(1+\dfrac{\sigma^2}{b}\right)\dfrac{1}{\sqrt{K}}\right)$} \\
 &  &  \\
(Diminishing) & & \\ \hline
\multirow{2}{*}{$\alpha_k = \alpha\gamma^{k}$} & \multirow{3}{*}{$\mathcal{O}\left(\dfrac{1}{K}+\dfrac{\sigma^2}{b}\right)$}  & \multirow{3}{*}{$\mathcal{O}\left(\dfrac{1}{K}\right) + \mathcal{O}\left(\left(\dfrac{\sigma^2}{b} + G^2\right)\alpha\right)$} \\
 &  &  \\
(Diminishing) & & \\ \hline
\toprule
\end{tabular}
\end{table}

\subsection{Existence of Critical Batch Size}
\label{sec:critical}
Motivated by the work of Zhang and others \cite{zhang2019algorithmic, shallue2019measuring, iiduka2022critical},
we use SFO complexity as the performance measure for a Riemannian stochastic optimizer.
We define SFO complexity as $Kb$, where $K$ is the number of steps needed for solving the problem, and $b$ is the batch size used in Algorithm \ref{alg:RSGD}.
Let $b^\star$ be the critical batch size for which $Kb$ is minimized.

Our analyses (i.e., Theorems \ref{thm:nLip-cnv-cns} and \ref{thm:nLip-cnv-dim}) lead to the number of steps $K$ needed to satisfy an $\varepsilon$-approximation, which is defined as
\begin{align*}
\frac{1}{K}\sum_{k=0}^{K-1}V_k(x) \leq \varepsilon.
\end{align*}

\begin{theorem}~\label{thm:critical}
Suppose Assumption \ref{asm:egrad-bound}, and let $M$ be a Hadamard manifold with sectional curvature lower bounded by $\kappa$ and $f : M \rightarrow \real$ be a smooth function.
Then, the numbers of steps $K$ needed to satisfy an $\varepsilon$-approximation for $\alpha_k = \alpha$, $\alpha_k = 1/\sqrt{k + 1}$, and $\alpha_k = \alpha\gamma^{p_k}$ are respectively
\begin{align*}
K &= \frac{C_2b}{\varepsilon b - (\sigma^2+G^2b)\alpha C_1}, \quad b > \frac{ (\sigma^2+G^2b)\alpha C_1}{\varepsilon}, \\
K &= \left(\frac{2C_1\sigma^2 + (2C_1G^2+C_2)b}{\varepsilon b}\right)^2 \\
K &= \frac{C_2b\alpha^{-1}\gamma^{-n}}{\varepsilon b - (\sigma^2+G^2b)\alpha C_1}, \quad b > \frac{ (\sigma^2+G^2b)\alpha C_1}{\varepsilon}.
\end{align*}
The $K$ is convex and monotone decreasing with respect to $b$. SFO complexity $Kb$ is convex with respect to $b$,
and there exist critical batch sizes $b^\star$ for $\alpha_k = \alpha$, $\alpha_k = 1/\sqrt{k + 1}$, and $\alpha_k = \alpha\gamma^{p_k}$ 
\begin{align*}
b^\star &= \frac{2C_2\sigma^2\alpha}{\varepsilon - G^2\alpha C_1}, \\
b^\star &= \exp\left\{\left(\frac{2C_1G^2 + C_2}{2C_1\sigma^2}\right)^2\right\}, \\
b^\star &= \frac{2C_2\sigma^2\alpha}{\varepsilon - G^2\alpha C_1}.
\end{align*}
\end{theorem}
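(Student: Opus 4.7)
The plan is to start from the convergence-rate bounds of Section \ref{sec:rate} for each of the three step-size schedules, set each bound $\le \varepsilon$, and solve for $K$ as an explicit function of $b$. For the constant schedule $\alpha_k = \alpha$, Theorem \ref{thm:nLip-cnv-cns} gives $(\sigma^2/b + G^2)\alpha C_1 + C_2/K \le \varepsilon$; isolating $C_2/K$ and clearing the inner fraction yields $K = C_2 b / (\varepsilon b - (\sigma^2 + G^2 b)\alpha C_1)$, with positivity of the denominator furnishing the stated admissibility condition on $b$. The geometric schedule $\alpha_k = \alpha\gamma^{p_k}$ is handled in exactly the same way from \eqref{eq:sfo-temp2}, the only new ingredient being the lower bound $\alpha_{K-1} \ge \alpha\gamma^n$, which accounts for the extra $\alpha^{-1}\gamma^{-n}$ factor in the formula. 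For $\alpha_k = 1/\sqrt{k+1}$, the leading term of the bound \eqref{eq:sfo-temp1} is $(2C_1(\sigma^2/b + G^2) + C_2)/\sqrt{K}$; setting this $\le \varepsilon$, squaring, and regrouping over $\varepsilon b$ produces the stated quadratic-in-$b^{-1}$ expression for $K$.

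The qualitative claims about $K(b)$ then reduce to elementary calculus on these closed forms. In the constant and geometric cases, $K(b)$ has the M\"obius shape $A b/(B b - C)$ with $A,B,C > 0$; its first and second derivatives in $b$ are $-AC/(Bb - C)^2 < 0$ and $2ABC/(Bb - C)^3 > 0$ on the admissible range, establishing both monotone decrease and convexity. In the $1/\sqrt{k+1}$ case $K(b)$ is the square of the positive, decreasing, convex function $b \mapsto (P/b + Q)/\varepsilon$, hence itself decreasing and convex.

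For the critical batch size, I would form the SFO complexity $Kb$ in each setting and locate its unique interior minimizer. In the constant and geometric cases, $Kb = C_2 b^2 / ((\varepsilon - G^2\alpha C_1)b - \sigma^2\alpha C_1)$ (with an extra $\alpha^{-1}\gamma^{-n}$ factor in the geometric case), and differentiation reduces the first-order condition to the linear equation $(\varepsilon - G^2\alpha C_1)\,b = 2\sigma^2\alpha C_1$, yielding the stated $b^\star$; the sign change of the derivative at $b^\star$ simultaneously gives convexity of $Kb$ and minimality. For the $1/\sqrt{k+1}$ schedule, $Kb = (P + Q b)^2/(\varepsilon^2 b)$, and the first-order condition reduces to $Q b = P$. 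I expect this last case to be the main source of subtlety: the admissible range of $K$ and the subleading $-1/K$ term in \eqref{eq:sfo-temp1} must be controlled so that the leading-order balance used to locate $b^\star$ is genuinely decisive, and only after this verification does the convexity of $Kb$ in $b$ follow by a direct second-derivative computation in each of the three closed forms.
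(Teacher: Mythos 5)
Your method is the same as the paper's: for each schedule, set the corresponding upper bound from Section \ref{sec:rate} equal to $\varepsilon$, solve for $K$ as a closed form in $b$, and then establish monotonicity/convexity of $K$ and convexity of $Kb$ by direct differentiation; the geometric schedule is reduced to the constant one via the factor $\alpha^{-1}\gamma^{-n}$, exactly as in the paper. Dropping the subleading $-1/K$ term in \eqref{eq:sfo-temp1} is also what the paper does, and it is harmless since it only enlarges the bound being set to $\varepsilon$.

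The real issue is your claim that the first-order conditions ``yield the stated $b^\star$'': they do not, and you cannot leave this unresolved. For $\alpha_k = 1/\sqrt{k+1}$ you correctly reduce $d(Kb)/db = 0$ to $Qb = P$ with $P = 2C_1\sigma^2$, $Q = 2C_1G^2 + C_2$, which gives $b^\star = 2C_1\sigma^2/(2C_1G^2+C_2)$, not the stated $\exp\{((2C_1G^2+C_2)/(2C_1\sigma^2))^2\}$. (The paper's own proof asserts $d(Kb)/db = -(2C_1\sigma^2/\varepsilon)^2\log b + ((2C_1G^2+C_2)/\varepsilon)^2$, which is not the derivative of $Kb = (P+Qb)^2/(\varepsilon^2 b)$; your rational expression is the correct one.) Likewise, your linear equation $(\varepsilon - G^2\alpha C_1)b = 2\sigma^2\alpha C_1$ in the constant and geometric cases gives $b^\star = 2C_1\sigma^2\alpha/(\varepsilon - G^2\alpha C_1)$, with $C_1$ rather than the stated $C_2$ in the numerator. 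So either you present these as corrections to the displayed formulas or your proof is internally inconsistent. Two smaller points: you should record explicitly that $\varepsilon - G^2\alpha C_1 > 0$ on the admissible range (the paper derives it from $\varepsilon = (\sigma^2/b+G^2)\alpha C_1 + C_2/K > G^2\alpha C_1$), since the signs of your derivative formulas depend on it; and a sign change of $d(Kb)/db$ at $b^\star$ gives minimality but not convexity, so the second-derivative computation you defer to the end is genuinely needed in all three cases, not only the $1/\sqrt{k+1}$ one.
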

\begin{proof}
First, we consider $\alpha_k = \alpha$.
From the upper bound of \eqref{eq:nLip-cnv-cns-sum}, let us consider
\begin{align*}
\left(\frac{\sigma^2}{b} + G^2\right)\alpha C_1 + \frac{C_2}{K} = \varepsilon,
\end{align*}
which implies
\begin{align*}
K = \frac{C_2b}{\varepsilon b - (\sigma^2+G^2b)\alpha C_1}, \quad b > \frac{ (\sigma^2+G^2b)\alpha C_1}{\varepsilon}.
\end{align*}
Since
\begin{align*}
\frac{dK}{db} &= -\frac{C_1C_2\alpha\sigma^2}{\{\varepsilon b - (\sigma^2+G^2b)\alpha C_1\}^2} \leq 0, \\
\frac{d^2K}{db^2} &= \frac{2C_1C_2\alpha\sigma^2(\varepsilon - G^2\alpha C_1)}{\{\varepsilon b - (\sigma^2+G^2b)\alpha C_1\}^3} \geq 0,
\end{align*}
where the second inequality comes from $\varepsilon = (\sigma^2 / b + G^2)\alpha C_1 + C_2/K > G^2\alpha C_1$,
and $K$ is convex and monotone decreasing with respect to batch size $b$.
Next, SFO complexity, defined as
\begin{align*}
Kb := \frac{C_2b^2}{\varepsilon b - (\sigma^2+G^2b)\alpha C_1},
\end{align*}
is convex with respect to batch size $b$ since
\begin{align*}
\frac{d^2(Kb)}{db^2} = \frac{2C_1^2C_2\alpha^2\sigma^4}{\{\varepsilon b - (\sigma^2+G^2b)\alpha C_1\}^3} \geq 0.
\end{align*}
Moreover, from
\begin{align*}
\frac{d(Kb)}{db} = \frac{C_2b(\varepsilon b - bG^2\alpha C_1 - 2 \sigma^2\alpha C_1)}{\{\varepsilon b - (\sigma^2+G^2b)\alpha C_1\}^2},
\end{align*}
$d(Kb)/db = 0$ if and only if $b = 2C_2\sigma^2\alpha / (\varepsilon - G^2\alpha C_1)$. Therefore, SFO complexity $Kb$ is minimized at
\begin{align*}
b^\star := \frac{2C_2\sigma^2\alpha}{\varepsilon - G^2\alpha C_1},
\end{align*}
which is the critical batch size.

Next, we consider $\alpha_k = 1 / \sqrt{k + 1}$.
From \eqref{eq:sfo-temp1}, let us consider
\begin{align*}
\frac{2C_1}{\sqrt{K}}\left(\frac{\sigma^2}{b} + G^2\right) + \frac{C_2}{\sqrt{K}} = \varepsilon,
\end{align*}
which implies
\begin{align*}
K = \left(\frac{2C_1\sigma^2 + (2C_1G^2+C_2)b}{\varepsilon b}\right)^2.
\end{align*}
Moreover, from
\begin{align*}
\frac{d(Kb)}{db} = -\left(\frac{2C_1\sigma^2}{\varepsilon}\right)^2\log b + \left(\frac{2C_1G^2+C_2}{\varepsilon}\right)^2,
\end{align*}
$d(Kb)/db = 0$ if and only if $\log b = \{(2C_1G^2 + C_2) / (2C_1\sigma^2)\}^2$.
Therefore, SFO complexity $Kb$ is minimized at
\begin{align*}
b^\star := \exp\left\{\left(\frac{2C_1G^2 + C_2}{2C_1\sigma^2}\right)^2\right\},
\end{align*}
which again is the critical batch size.

Finally, we consider $\alpha_k = \alpha\gamma^{p_k}$.
From \eqref{eq:sfo-temp2}, let us consider
\begin{align*}
\left(\frac{\sigma^2}{b} + G^2\right)C_1\alpha + \frac{C_2}{\alpha\gamma^nK} = \varepsilon,
\end{align*}
which implies
\begin{align*}
K = \frac{1}{\alpha\gamma^n} \cdot \frac{C_2b}{\varepsilon b - (\sigma^2+G^2b)\alpha C_1}, \quad b > \frac{ (\sigma^2+G^2b)\alpha C_1}{\varepsilon}.
\end{align*}
Since this shows that we can multiply the result of $\alpha = \alpha$ by $\alpha^{-1}\gamma^{-n} > 0$,
we can immediately complete the proof from the above discussion.
\end{proof}

Table \ref{tab:rel} shows the relationship between the number of steps $K$ and batch size $b$ for each step size
\begin{table}[htbp]
\caption{Relationship between number of steps $K$ needed for $\varepsilon$-approximation and batch size $b$ ($\gamma \in (0, 1)$, $G, \alpha, C_1, C_2, \varepsilon > 0$, and $\sigma^2 \geq 0$). \label{tab:rel}}
\centering
\begin{tabular}{ccc}
\bottomrule
\hline
\multirow{2}{*}{Step size $\alpha_k$} & Relationship & Lower bound \\
 & between $K$ and $b$ & of $b$ \\ \hline
\multirow{2}{*}{$\alpha_k = \alpha$} & \multirow{3}{*}{$K = \dfrac{C_2b}{\varepsilon b - (\sigma^2+G^2b)\alpha C_1}$} & \multirow{3}{*}{$\dfrac{ (\sigma^2+G^2b)\alpha C_1}{\varepsilon}$} \\
 &  &  \\
(Constant) & & \\ \hline
 \multirow{2}{*}{$\alpha_k = 1/\sqrt{k+1}$} & \multirow{3}{*}{$K = \left(\dfrac{2C_1\sigma^2 + (2C_1G^2+C_2)b}{\varepsilon b}\right)^2$} & \multirow{3}{*}{--} \\
 &  &  \\
(Diminishing) & & \\ \hline
\multirow{2}{*}{$\alpha_k = \alpha\gamma^{k}$} & \multirow{3}{*}{$K = \dfrac{C_2b\alpha^{-1}\gamma^{-n}}{\varepsilon b - (\sigma^2+G^2b)\alpha C_1}$}  & \multirow{3}{*}{$\dfrac{ (\sigma^2+G^2b)\alpha C_1}{\varepsilon}$} \\
 &  &  \\
(Diminishing) & & \\ \hline
\toprule
\end{tabular}
\end{table}

\section{Numerical Experiments}
\label{sec:experiments}
The experiments were run on a MacBook Air (2020) laptop with a 1.8 GHz Intel Core i5 CPU, 8 GB 1600 MHz DDR3 memory, and the Monterey operating system (version 12.2).
The algorithms were written in Python 3.10.7 using the PyTorch 1.13.1 package and the Matplotlib 3.6.2 package.
The code is available at \url{https://github.com/iiduka-researches/rsgd-kylberg.git}.

\subsection{Geometry of Symmetric Positive Definite Manifold}
The set of $d \times d$ SPD matrices
\begin{align*}
\spd^d_{++} := \{P \in \real^{d \times d} &: P^\top = P, \forall x \in \real^d - \{0\}, \, x^\top Px > 0 \}
\end{align*}
endowed with the affine-invariant metric,
\begin{align*}
\ip{X_P}{Y_P}_P :=& \tr(X_P^\top P^{-1}Y_PP^{-1}),
\end{align*}
where $P \in \spd^d_{++}$ and $X_P,Y_P \in T_P\spd^d_{++}$, is a $d(d+1)/2$ dimensional Hadamard manifold
(i.e., sectional curvatures of $\spd^d_{++}$ are less than or equal to zero).
This Riemannian manifold is called an ``SPD manifold with an affine-invariant Riemannian metric" \cite{pennec2006riemannian, ferreira2006newton, sato2019riemannian, gao2020learning}.
Criscitiello and Boumal \cite{criscitiello2022accelerated} showed that the sectional curvatures of $\spd^d_{++}$ are at least $-1/2$.

The exponential map $\Exp_P:T_P\spd^d_{++}\rightarrow\spd^d_{++}$ at a point $P \in \spd^d_{++}$ was computed using
\begin{align*}
\Exp_P(X_P) = P^{\frac{1}{2}}\exp\left(P^{-\frac{1}{2}}X_PP^{-\frac{1}{2}}\right)P^{\frac{1}{2}},
\end{align*}
where $X_P \in T_P\spd^d_{++}$ \cite{ferreira2006newton}.

\subsection{Riemannian centroid problem on SPD manifold}
We considered the Riemannian centroid problem of a set of SPD matrices $\{A_i\}_{i=0}^n$,
which is frequently used for computer vision problems such as visual object categorization and pose categorization \cite{jayasumana2015kernel}.
The loss function can be expressed as
\begin{align*}
f(M) := \frac{1}{N}\sum_{i=0}^N\norm{\log\left(A_i^{-\frac{1}{2}}MA_i^{-\frac{1}{2}}\right)}_F^2,
\end{align*}
where $\norm{\cdot}_F$ is the Frobenius norm.

We followed preprocessing steps similar to ones used elsewhere \cite{harandi2014bregman} and used the Kylberg dataset \cite{kylberg2011kylberg},
which contains 28 texture classes of different natural and human-made surfaces. Each class has 160 unique samples imaged with and without rotation.
The original images were scaled to $128 \times 128$ pixels and covariance descriptors were generated from 1024 $4 \times 4$ non-overlapping pixel grids.
The feature vector at each pixel was represented as
\begin{align*}
x_{u, v} = \left[ I_{u,v}, \abs{\frac{\partial I}{\partial u}}, \abs{\frac{\partial I}{\partial v}}, \abs{\frac{\partial^2 I}{\partial u^2}}, \abs{\frac{\partial^2 I}{\partial v^2}} \right],
\end{align*}
where $I_{u,v}$ is the intensity value.

We evaluated Algorithm \ref{alg:RSGD} for several batch sizes by solving the Riemannian centroid problem on an SPD manifold on the Kylberg dataset.
We used three learning rates: $\alpha_k = \alpha$ (constant), $\alpha_k = 1 / \sqrt{k+1}$ (diminishing1), and $\alpha_k = \alpha\gamma^{p_k}$ (diminishing2).
We used $\alpha = 5 \times 10^{-4}$, $\gamma = 0.5$, and $n = 10$.
We defined $T$ as the number of steps until all the elements in the data set were used once.
Numerical experiments were performed for all batch sizes between $2^4$ and $2^9$.

\subsection{Numerical Results}
Figures \ref{fig:steps1} and \ref{fig:steps2} show the number of steps $K$ needed for $f(x_k) < \varepsilon$ initially decreased for Algorithm \ref{alg:RSGD} versus batch size.
They show the results for $\varepsilon = 1 / 2$ and $\varepsilon = 1 / 4$, respectively.
Supporting the results shown in Section \ref{sec:critical}, the number of steps $K$ is monotone decreasing and convex with respect to batch size $b$.
Figures \ref{fig:sfo1} and \ref{fig:sfo2} plot SFO complexity $Kb$ for the number of steps $K$ needed to satisfy $f(x_k) < \varepsilon$ versus batch size $b$.
They show the results for $\varepsilon = 1 / 2$ and $\varepsilon = 1 / 4$, respectively.
Supporting the results shown in Section \ref{sec:critical}, SFO complexity $Kb$ is convex with respect to $b$.

Figure \ref{fig:sfo1} shows that if $\varepsilon = 1/2$, the critical batch sizes for the constant, diminishing1, and diminishing2 learning rates are $247$, $205$, and $247$, respectively.
Figure \ref{fig:sfo2} shows that if $\varepsilon = 1/4$, the critical batch sizes are $2^8$, $233$ and $ 2^8$, respectively.
These results support Theorem \ref{thm:critical}, which implies that the constant and diminishing2 learning rates have the same critical batch size and that it decreases as $\varepsilon$ is increased.
As indicated by Theorem \ref{thm:critical}, the critical batch size of diminishing1 for $\varepsilon = 1/2$ is almost the same as that of diminishing1 for $\varepsilon = 1/4$.

\begin{figure}[htbp]
\centering
\includegraphics[width=0.8\columnwidth]{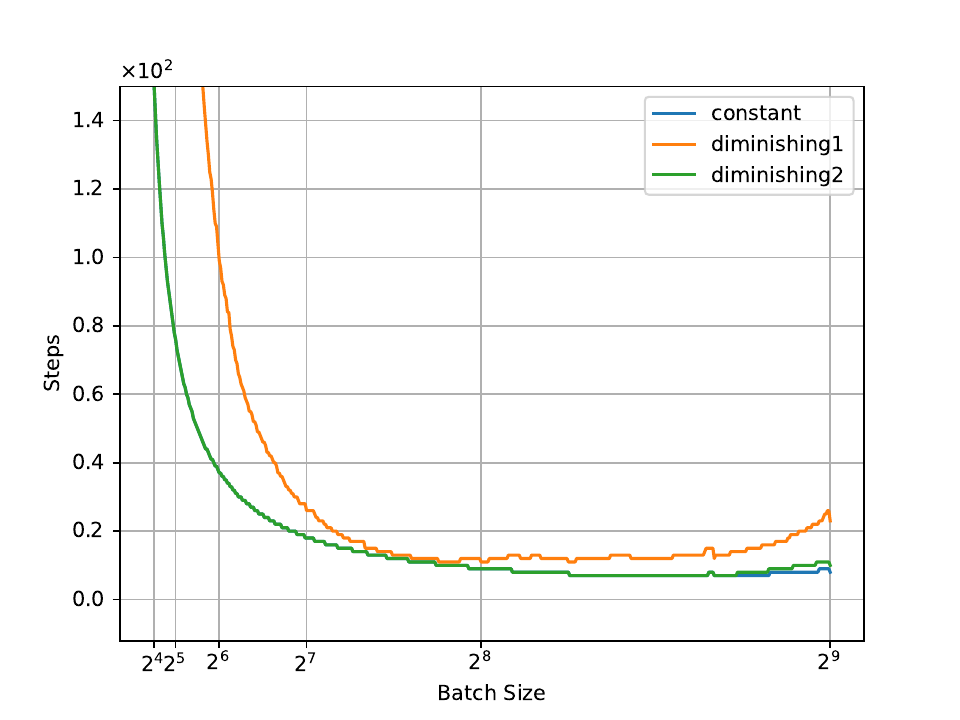}
\caption{Number of steps $K$ for Algorithm \ref{alg:RSGD} versus batch size $b$ when $\varepsilon = 1/2$.}
\label{fig:steps1}
\end{figure}

\begin{figure}[htbp]
\centering
\includegraphics[width=0.8\columnwidth]{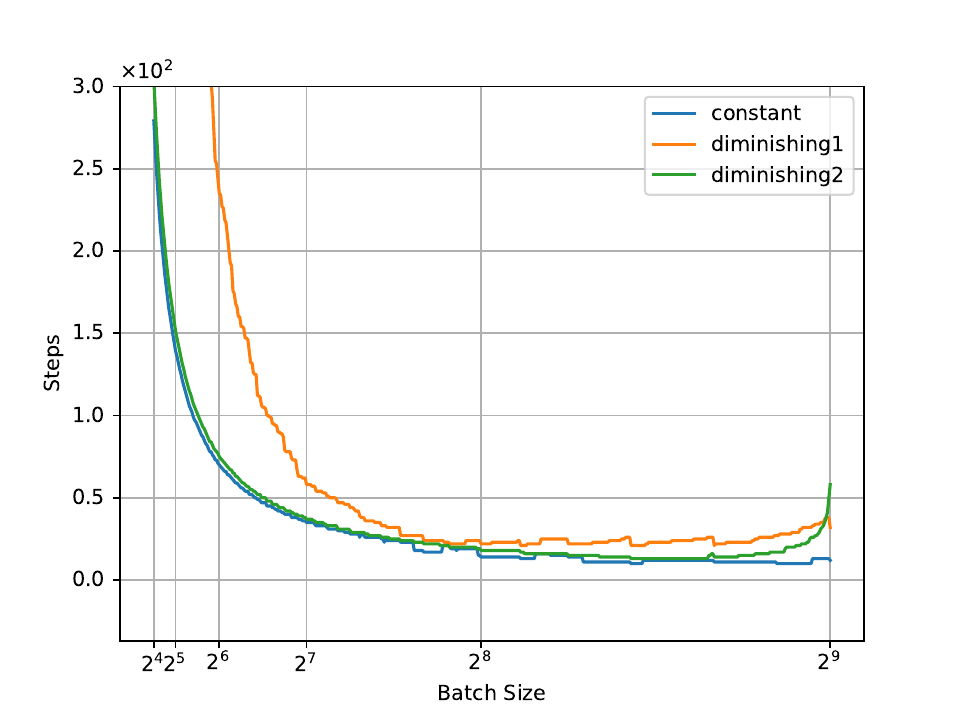}
\caption{Number of steps $K$ for Algorithm \ref{alg:RSGD} versus batch size $b$ when $\varepsilon = 1/4$.}
\label{fig:steps2}
\end{figure}

\begin{figure}[htbp]
\centering
\includegraphics[width=0.8\columnwidth]{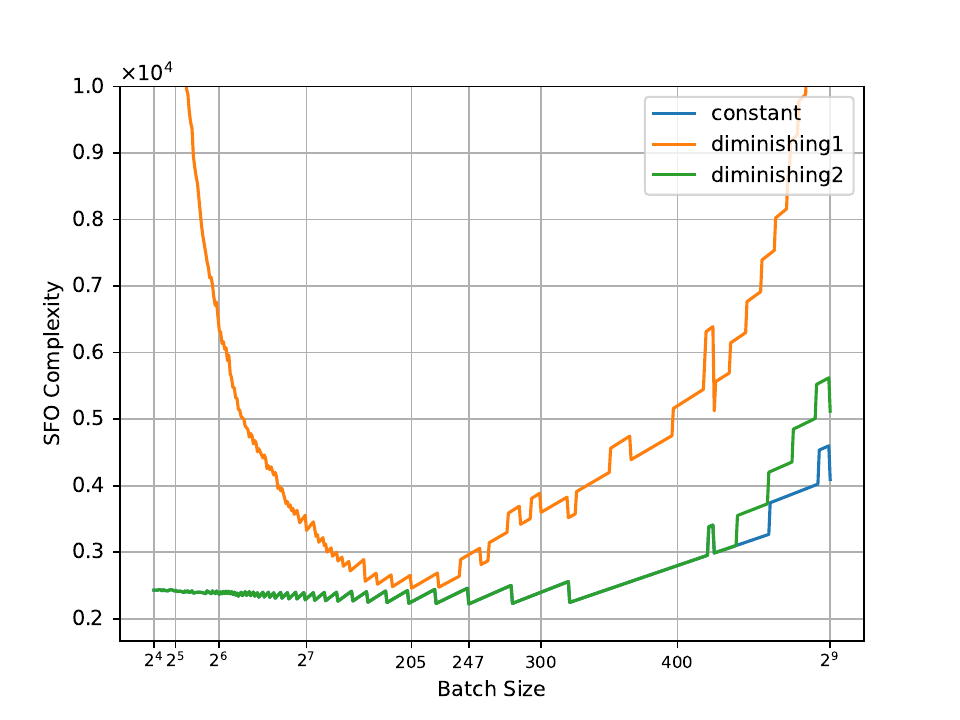}
\caption{SFO complexity $Kb$ for Algorithm \ref{alg:RSGD} versus batch size $b$ when $\varepsilon = 1/2$.}
\label{fig:sfo1}
\end{figure}

\begin{figure}[htbp]
\centering
\includegraphics[width=0.8\columnwidth]{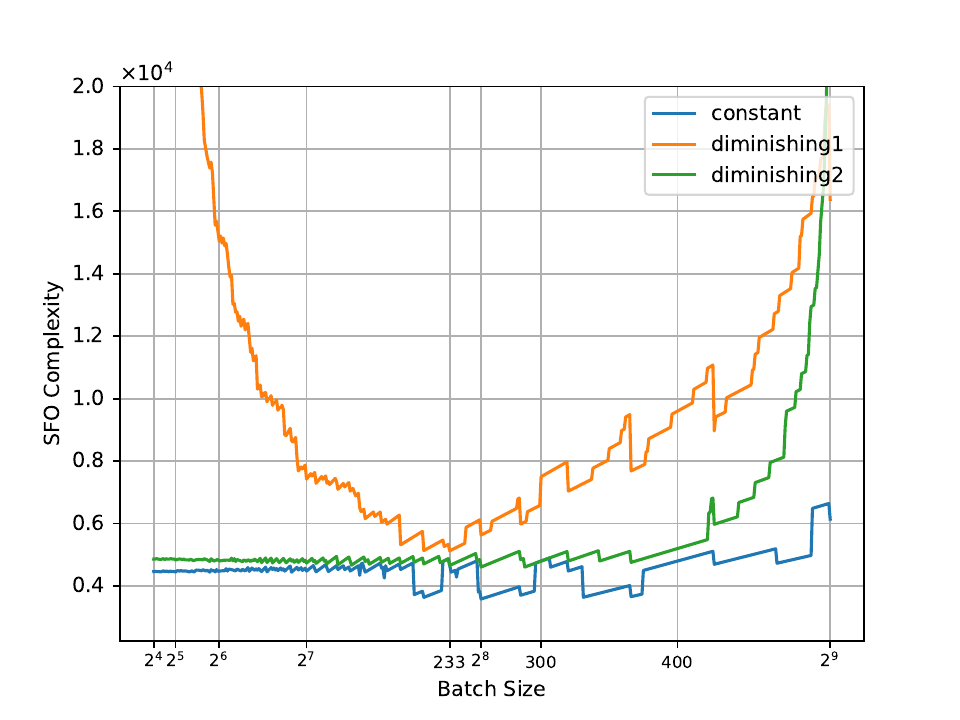}
\caption{SFO complexity $Kb$ for Algorithm \ref{alg:RSGD} versus batch size $b$ when $\varepsilon = 1/4$.}
\label{fig:sfo2}
\end{figure}

\section{Conclusion}
\label{sec:conclusion}
Our novel convergence analyses of Riemannian stochastic gradient descent on a Hadamard manifold, which incorporate the concept of mini-batch learning, overcome several problems in previous analyses.
We analyzed the relationship between batch size and the number of steps and demonstrated the existence of a critical batch size.
In practice, the number of steps for $\varepsilon$-approximation is monotone decreasing and convex with respect to batch size.
Moreover, stochastic first-order oracle complexity is convex with respect to batch size, and there exists a critical batch size that minimizes this complexity.
Numerical experiments in which we solved the Riemannian centroid problem on a symmetric positive definite manifold were performed using several batch sizes to verify the results of theoretical analysis.
With a constant step size, as $\varepsilon$ decreases, the critical batch size increases.
With a diminishing step size ($\alpha_k = \alpha \gamma^k$), the critical batch size matches that for the constant step size.
Therefore, the experiments give a numerical evidence of the results of theoretical analysis.



\bibliographystyle{abbrv}
\bibliography{biblio}

\end{document}